\documentclass[10pt]{amsart}
\usepackage{mathrsfs,amssymb,amsmath,url}

\theoremstyle{plain}
    \newtheorem{thm}{Theorem}
    \newtheorem{lem}[thm]{Lemma}
    \newtheorem{prop}[thm]{Proposition}
    \newtheorem{cor}[thm]{Corollary}

    \newtheorem{prob}[thm]{Problem}

\theoremstyle{definition}

\theoremstyle{remark}


\newcommand{\nin}{\notin}
\DeclareMathOperator{\Inv}{Inv}

\DeclareMathOperator{\Cl}{Cl}
\DeclareMathOperator{\Cll}{Cl_{loc}}
\DeclareMathOperator{\Monl}{Mon_{loc}}
\DeclareMathOperator{\Grl}{Gr_{loc}}
\DeclareMathOperator{\Clp}{Cl_{fpart}}
\DeclareMathOperator{\Monp}{Mon_{fpart}}

\newcommand{\inv}{\Inv}


\newcommand{\um}{^{(m)}}
\newcommand{\uo}{^{(1)}}

\newcommand{\cll}[1]{\langle #1 \rangle_{loc}}
\newcommand{\cl}[1]{\langle #1 \rangle}

\renewcommand{\O}{{\mathscr O}}
\newcommand{\On}{{\mathscr O}^{(n)}}

\newcommand{\Oo}{{\mathscr O}^{(1)}}

\DeclareMathOperator{\ppol}{Pol}
\DeclareMathOperator{\id}{id}
\DeclareMathOperator{\ran}{ran} \DeclareMathOperator{\dom}{dom}
\newcommand{\rest}{\upharpoonright}

\newcommand{\C}{{\mathscr C}}
\newcommand{\D}{{\mathscr D}}
\newcommand{\F}{{\mathscr F}}

\newcommand{\To}{\rightarrow}
\newcommand{\mult}{\times}

\newcommand{\GG}{{\mathscr G}}
\newcommand{\R}{{\mathscr R}}
\newcommand{\X}{{\mathfrak X}}

\newcommand{\G}{{\mathfrak G}}
\renewcommand{\H}{{\mathfrak H}}
\newcommand{\M}{{\mathscr M}}

\renewcommand{\S}{{\mathscr S}}

\renewcommand{\L}{{\mathfrak L}}


    \title{More sublattices of the lattice of local clones}

    \author{Michael Pinsker}
    \email{marula@gmx.at}
    \urladdr{http://dmg.tuwien.ac.at/pinsker/}
    \address{Laboratoire de Math\'{e}matiques Nicolas Oresme\\
CNRS UMR 6139\\ Universit\'{e} de Caen\\14032 Caen Cedex\\
France}
    \thanks{The author is grateful for support through Erwin Schr\"{o}dinger Fellowship J2742-N18 of the Austrian Science Fund (FWF)}

    \keywords{clone; local closure;
algebraic lattice; embedding} \subjclass[2000]{Primary 08A40;
secondary 08A05}

\begin{document}
    \maketitle
    \begin{abstract}
        We investigate the complexity of the lattice of local clones over
        a countably infinite base set. In particular, we prove
        that this lattice contains all algebraic lattices with at
        most countably many compact elements as complete
        sublattices, but that the class of lattices embeddable into
        the local clone lattice is strictly larger than that: For example, the lattice $M_{2^\omega}$ is a sublattice of the local clone lattice.
    \end{abstract}


    \section{Local clones}

\subsection{Defining local clones}
        Fix a countably infinite base set $X$, and denote for all $n\geq 1$ the set $X^{X^n}=\{f: X^n\rightarrow X\}$
        of $n$-ary operations on $X$ by $\On$. Then the union
        $\O:=\bigcup_{n\geq 1}\On$ is the set of all finitary
        operations on $X$. A \emph{clone} $\C$ is a subset of $\O$
        satisfying the following two properties:
        \begin{itemize}
            \item $\C$ contains all projections, i.e., for all $1\leq
            k\leq n$ the operation $\pi^n_k\in\On$ defined by
            $\pi^n_k(x_1,\ldots,x_n)=x_k$, and

            \item $\C$ is closed under composition, i.e., whenever
            $f\in\C$ is $n$-ary and $g_1,\ldots,g_n\in\C$ are
            $m$-ary, then the operation $f(g_1,\ldots,g_n)\in\O\um$ defined by
            $$
                (x_1,\ldots,x_m)\mapsto f(g_1(x_1,\ldots,x_m),\ldots,g_n(x_1,\ldots,x_m))
            $$
            also is an element of $\C$.
        \end{itemize}

        Since arbitrary intersections of clones are again clones, the set of all clones on $X$, equipped with the order of
        inclusion, forms a complete lattice $\Cl(X)$. In this paper, we are not interested
        in all clones of $\Cl(X)$, but only in
        clones which satisfy an additional topological closure property: Equip $X$ with the discrete topology, and
        $\On=X^{X^n}$ with the corresponding product topology (Tychonoff topology), for every
        $n\geq 1$. A clone $\C$ is called \emph{locally closed} or just
        \emph{local} iff each of its $n$-ary fragments $\C\cap\On$ is a closed
        subset of $\On$. Equivalently, a clone $\C$ is local iff it satisfies the
            following interpolation property:\begin{quote}
                For all $n\geq 1$ and all $g\in\On$, if for all finite $A\subseteq X^n$
                there exists an $n$-ary $f\in\C$ which agrees with
                $g$ on $A$, then $g\in\C$.\end{quote}

        Again, taking the set of all local clones on $X$, and
        ordering them according to set-theoretical inclusion, one
        obtains a complete lattice, which we denote by $\Cll(X)$: This is because intersections of
        clones are clones, and because arbitrary intersections of
        closed sets are closed. We are interested in the structure
        of $\Cll(X)$, in particular in how complicated it is as a
        lattice.

        Before we start our investigations, we give an alternative
        description of local clones which will be useful.
        Let $f\in\On$ and let $\rho\subseteq X^m$ be a relation. We
        say that $f$ \emph{preserves} $\rho$ iff
        $f(r_1,\ldots,r_n)\in\rho$ whenever $r_1,\ldots ,r_n\in\rho$; here, the $m$-tuple
        $f(r_1,\ldots,r_n)$ is calculated \emph{componentwise}, i.e., it is the $m$-tuple whose $i$-th component is obtained by applying $f$ to the $n$-tuple consisting of the $i$-th components of the tuples $r_1,\ldots,r_n$.
        For a set of relations $\R$, we write $\ppol(\R)$ for the set
        of those operations in $\O$ which preserve all $\rho\in\R$.
        The operations in $\ppol(\R)$ are called \emph{polymorphisms}
        of $\R$. The following
        is due to \cite{Rom77}, see also the textbook\ \cite{Sze86}.

        \begin{thm}
            $\ppol(\R)$ is a local clone for all sets of relations
            $\R$. Moreover, every local clone is of this form.
        \end{thm}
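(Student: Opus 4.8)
The plan is to prove the two assertions separately. First I would show that $\ppol(\R)$ is always a local clone; this is the "easy" direction and splits into verifying that it is a clone and that it is locally closed.

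\medskip

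\noindent\emph{$\ppol(\R)$ is a clone.} First I check that every projection $\pi^n_k$ preserves an arbitrary relation $\rho\subseteq X^m$: if $r_1,\ldots,r_n\in\rho$, then $\pi^n_k(r_1,\ldots,r_n)=r_k\in\rho$, since the projection simply returns one of its inputs componentwise. Next I verify closure under composition. Suppose $f\in\ppol(\R)$ is $n$-ary and $g_1,\ldots,g_n\in\ppol(\R)$ are $m$-ary, and let $\rho\in\R$; I must show $h:=f(g_1,\ldots,g_n)$ preserves $\rho$. Given $r_1,\ldots,r_m\in\rho$, each $g_i(r_1,\ldots,r_m)$ lies in $\rho$ because $g_i$ preserves $\rho$; then applying $f$ to these $n$ tuples, which all lie in $\rho$, again yields an element of $\rho$ because $f$ preserves $\rho$. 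Since the componentwise action composes correctly, $h(r_1,\ldots,r_m)=f(g_1(\bar r),\ldots,g_n(\bar r))\in\rho$, so $h\in\ppol(\R)$. As this holds for every $\rho\in\R$, the set $\ppol(\R)$ is a clone.

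\medskip

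\noindent\emph{$\ppol(\R)$ is local.} Here I use the interpolation characterization from the excerpt. Fix $n\geq 1$ and $g\in\On$ such that for every finite $A\subseteq X^n$ there is an $f\in\ppol(\R)$ agreeing with $g$ on $A$; I want $g\in\ppol(\R)$. Fix $\rho\subseteq X^m$ in $\R$ and tuples $r_1,\ldots,r_n\in\rho$. The tuple $g(r_1,\ldots,r_n)$ is determined by the finitely many values of $g$ on the finite set $A:=\{(r_1(j),\ldots,r_n(j)):1\leq j\leq m\}\subseteq X^n$. Choosing $f\in\ppol(\R)$ agreeing with $g$ on this finite $A$, I get $g(r_1,\ldots,r_n)=f(r_1,\ldots,r_n)\in\rho$, the membership holding because $f$ preserves $\rho$. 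Hence $g$ preserves every $\rho\in\R$, i.e.\ $g\in\ppol(\R)$, so $\ppol(\R)$ satisfies the interpolation property and is local.

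\medskip

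\noindent\emph{Every local clone has this form.} This is the substantive direction. Given a local clone $\C$, I would take $\R$ to be the set of all relations preserved by every operation in $\C$ — concretely, for each $m\geq 1$ one records the \emph{invariant relations} of $\C$, i.e.\ those $\rho\subseteq X^m$ preserved by all $f\in\C$ — and then show $\ppol(\R)=\C$. The inclusion $\C\subseteq\ppol(\R)$ is immediate from the definition of $\R$. The reverse inclusion $\ppol(\R)\subseteq\C$ is the crux and the main obstacle: I must show that any $g\notin\C$ fails to preserve some relation in $\R$. The natural witness is a graph-type relation built from the orbit of a finite tuple. Given an $n$-ary $g\notin\C$, locality of $\C$ supplies, via the interpolation property, a finite $A\subseteq X^n$ on which no member of $\C$ agrees with $g$; enumerating $A=\{a_1,\ldots,a_k\}$ with each $a_i\in X^n$, one forms the $k$-ary relation $\rho:=\{(h(a_1),\ldots,h(a_k)):h\in\C,\ h\text{ }n\text{-ary}\}\subseteq X^k$, which is preserved by $\C$ (this uses that $\C$ is a clone closed under composition, so applying operations of $\C$ componentwise to such tuples stays in the set) and hence lies in $\R$. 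The tuple $(g(a_1),\ldots,g(a_k))$ does \emph{not} lie in $\rho$, for otherwise some $h\in\C$ would satisfy $h(a_i)=g(a_i)$ for all $i$, contradicting the choice of $A$. Finally I check this exhibits $g$ failing to preserve $\rho$: the $k$ columns of the defining matrix of $\rho$ are themselves tuples in $\rho$, and $g$ applied to them componentwise reproduces exactly $(g(a_1),\ldots,g(a_k))\notin\rho$. Thus $g\notin\ppol(\R)$, completing the argument. I expect the bookkeeping identifying the columns of the generating matrix with elements of $\rho$ to be the delicate point, but it is routine once the relation $\rho$ is set up correctly.
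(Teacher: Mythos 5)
Your proof is correct. Note that the paper itself does not prove this theorem at all: it is quoted as a known result of Romov, with references to \cite{Rom77} and the textbook \cite{Sze86}, and your argument is essentially the standard one found there --- the Galois-connection construction where the witnessing relation for $g\notin\C$ is the ``orbit'' $\rho=\{(h(a_1),\ldots,h(a_k)):h\in\C^{(n)}\}$ of the finite interpolation-failure set $A=\{a_1,\ldots,a_k\}$. Both halves of your argument go through: the clone and locality verifications for $\ppol(\R)$ are sound (for locality, the key point you correctly exploit is that each relation $\rho\in\R$ is \emph{finitary}, so membership of $g(r_1,\ldots,r_n)$ in $\rho$ only depends on the values of $g$ on the finitely many rows of the matrix with columns $r_1,\ldots,r_n$), and the converse direction is set up properly. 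One small slip in the last step: the matrix whose rows are $a_1,\ldots,a_k$ is a $k\times n$ matrix, so it has $n$ columns (not $k$), one for each argument of $g$; each column equals $\bigl(\pi^n_j(a_1),\ldots,\pi^n_j(a_k)\bigr)$ and hence lies in $\rho$ precisely because $\C$ contains the projections $\pi^n_j$ --- worth saying explicitly, since it is the only place where the presence of projections in $\C$ is needed. With that indexing fixed, applying $g$ to these $n$ columns componentwise yields $(g(a_1),\ldots,g(a_k))\notin\rho$, exactly as you claim, and the proof is complete.
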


        Similarly, for an operation $f\in\On$ and a relation $\rho\subseteq X^m$, we say
        that $\rho$ is \emph{invariant} under $f$ iff $f$ preserves $\rho$.
        Given a set of operations $\F\subseteq\O$, we write
        $\inv(\F)$ for the set of all relations which are invariant
        under all $f\in\F$. Since arbitrary intersections of local clones are local clones again,
        the mapping on the power set of $\O$ which assigns
        to every set of operations $\F\subseteq\O$ the smallest
        local clone $\cll{\F}$ containing $\F$ is a closure operator,
        the closed elements of which are exactly the local clones.
        Using the operators $\ppol$ and $\inv$ which connect
        operations and relations, one obtains the following well-known alternative
        for
        describing this operator (confer \cite{Rom77} or \cite{Sze86}).

        \begin{thm}
            Let $\F\subseteq\O$. Then $\cll{\F}=\ppol\Inv(\F)$.
        \end{thm}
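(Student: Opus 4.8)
The plan is to prove the two inclusions separately. The inclusion $\cll{\F}\subseteq\ppol\inv(\F)$ is immediate from the preceding theorem: applying that theorem to the set of relations $\R=\inv(\F)$ shows that $\ppol\inv(\F)$ is a local clone. Moreover, by the very definitions of $\inv$ and $\ppol$, every $f\in\F$ preserves every relation that is invariant under all of $\F$, so $\F\subseteq\ppol\inv(\F)$. Since $\cll{\F}$ is by definition the smallest local clone containing $\F$, the inclusion follows.

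The substance lies in the reverse inclusion $\ppol\inv(\F)\subseteq\cll{\F}$. I would fix an $n$-ary $g\in\ppol\inv(\F)$ and exploit that $\cll{\F}$ is local: by the interpolation property, it suffices to show that for every finite $A\subseteq X^n$ there is an $n$-ary operation in $\cll{\F}$ agreeing with $g$ on $A$. I will in fact produce such an interpolant already inside the (not necessarily local) clone $\cl{\F}$ generated by $\F$, which is contained in $\cll{\F}$. So fix a finite set $A=\{a_1,\ldots,a_k\}\subseteq X^n$.

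The key step is to pass from the \emph{rows} $a_1,\ldots,a_k$ to the \emph{columns}: for each coordinate $i\in\{1,\ldots,n\}$ set $c_i:=(a_1^i,\ldots,a_k^i)\in X^k$, the $k$-tuple collecting the $i$-th entries of the $a_j$. Let $\rho\subseteq X^k$ be the smallest subset containing $c_1,\ldots,c_n$ and closed under the operations of $\F$ applied componentwise; concretely $\rho=\{h(c_1,\ldots,c_n):h\in\cl{\F}\cap\On\}$, the subalgebra of $(X^k;\F)$ generated by the columns. By construction $\rho$ is invariant under $\F$, so $\rho\in\inv(\F)$. Since $g\in\ppol\inv(\F)$ preserves $\rho$ and $c_1,\ldots,c_n\in\rho$, the componentwise value $g(c_1,\ldots,c_n)$ again lies in $\rho$. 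But its $j$-th component is $g(a_j^1,\ldots,a_j^n)=g(a_j)$, so $(g(a_1),\ldots,g(a_k))\in\rho$. By the description of $\rho$ there is therefore an $n$-ary $f\in\cl{\F}$ with $(g(a_1),\ldots,g(a_k))=f(c_1,\ldots,c_n)$, and reading off the $j$-th component gives $f(a_j)=g(a_j)$ for every $j$. Thus $f$ agrees with $g$ on $A$, completing the interpolation.

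The main obstacle is choosing $\rho$ correctly and keeping the row/column bookkeeping straight: everything hinges on the observation that applying $g$ componentwise to the columns $c_i$ reproduces precisely the values $g(a_j)$ along the fibre, so that ``$g$ preserves the subalgebra generated by the columns'' is exactly equivalent to ``$g$ is locally interpolated by $\cl{\F}$ on $A$''. Once this relation is identified, the remaining verifications — that $\rho$ is $\F$-invariant and that membership in $\rho$ yields an operation of $\cl{\F}$ — are routine consequences of the definitions of clone and of componentwise preservation.
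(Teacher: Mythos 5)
Your proof is correct. Note that the paper does not actually prove this theorem --- it only cites it (Romov~\cite{Rom77}, Szendrei~\cite{Sze86}) --- so there is no internal argument to compare against; what you give is precisely the standard Galois-connection proof found in those references: the easy inclusion follows since $\ppol\Inv(\F)$ is a local clone containing $\F$, and for the hard inclusion one interpolates $g$ on a finite $A\subseteq X^n$ by taking $\rho\subseteq X^{|A|}$ to be the subalgebra of the power generated by the columns of $A$, which equals $\{h(c_1,\ldots,c_n): h\in\cl{\F}\cap\On\}$ because $\cl{\F}$ contains the projections and is closed under composition, so that $g(c_1,\ldots,c_n)\in\rho$ yields the required interpolant $f\in\cl{\F}\subseteq\cll{\F}$. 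All the steps you flag as routine (the $\F$-invariance of $\rho$ and the row/column bookkeeping identifying the $j$-th component of $g(c_1,\ldots,c_n)$ with $g(a_j)$) do check out.
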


        As already mentioned, the aim of this paper is to investigate the structure of the local clone lattice.
        So far, this lattice has been studied only sporadically, e.g.\ in \cite{RS82-LocallyMaximal}, \cite{RS84-LocalCompletenessI}. There, the
         emphasis was put on finding local completeness criteria for sets of operations $\F\subseteq \O$, i.e., on how to decide whether or
        not $\cll{\F}=\O$. Only very recently has the importance of
        the local clone lattice to problems from model theory and
        theoretical computer science been revealed:

\subsection{The use of local clones}
        Let $\Gamma=(X,\R)$ be a countably infinite structure; that is, $X$ is a
        countably infinite base set and $\R$ is a set of finitary
        relations on $X$. Consider the expansion $\Gamma'$ of $\Gamma$ by all
        relations which are first-order definable from $\Gamma$. More
        precisely, $\Gamma'$ has $X$ as its base set and its relations
        $\R'$ consist of all finitary relations which can be defined
        from relations in $\R$ using first-order formulas. A
        \emph{reduct} of $\Gamma'$ is a structure $\Delta=(X,\D)$, where $\D\subseteq
        \R'$. We also call $\Delta$ a reduct of $\Gamma$, which essentially
        amounts to saying that we expect our structure $\Gamma$ to be
        closed under first-order definitions. Clearly, the set of
        reducts of $\Gamma$ is in one-to-one correspondence with the
        power set of $\R'$, and therefore not of much interest as a
        partial order. However, it might be more reasonable to consider such
        reducts up to, say, \emph{first-order interdefinability}. That is,
        we may consider two reducts $\Delta_1=(X,\D_1)$ and $\Delta_2=(X,\D_2)$ the same iff their
        first-order expansions coincide, or equivalently iff all
        relations of $\Delta_1$ are first-order definable in $\Delta_2$ and
        vice-versa.

        In 1976, P.\ J.\ Cameron~\cite{Cameron5} showed that there are exactly five reducts of $(\mathbb Q,<)$ up to
        first-order interdefinability. Recently, M.\ Junker and M.\ Ziegler gave a
        new proof of this fact, and established that $(\mathbb Q,<,a)$, the expansion
        of $(\mathbb Q,<)$ by a constant $a$, has 116 reducts~\cite{JunkerZiegler}.
        S.\ Thomas proved that the first-order theory of the random graph also has exactly five reducts, up to
        first-order interdefinability ~\cite{RandomReducts}.

        These examples have in common that the structures under
        consideration are \emph{$\omega$-categorical}, i.e., their first-order
        theories determine
        their countable models up to isomorphism. This is no
        coincidence: For, given an $\omega$-categorical structure $\Gamma$,
        its reducts up to first-order interdefinability are in
        one-to-one correspondence with the locally closed permutation groups which contain the automorphism
        group of $\Gamma$, providing a tool for describing such reducts (confer \cite{Cam90-Oligo}).

        A natural variant of these concepts
        is to consider reducts up to \emph{primitive positive
        interdefinability}. That is, we consider two reducts
        $\Delta_1, \Delta_2$ of $\Gamma$ the same iff their
        expansions by all relations which are definable from
        each of the structures by primitive positive
        formulas coincide. (A first-order formula is called
        \emph{primitive positive} iff it is of the form $\exists
        \overline x (\phi_1 \wedge \dots \wedge \phi_l)$ for atomic formulas
        $\phi_1,\dots,\phi_l$.)  It turns out that for $\omega$-categorical structures $\Gamma$, the \emph{local
        clones} containing all automorphisms of $\Gamma$ are in one-to-one correspondence with those reducts
        of the first-order expansion of
        $\Gamma$ which are closed under primitive positive
        definitions. This recent connection, which relies on a theorem from \cite{BN06}, has already been exploited in \cite{BCP}, where the reducts of $(X,=)$, the structure whose only relation is the equality, have been
        classified using this method. It turns out that despite the simplicity of this structure, the lattice of its reducts is quite complex, and in particular has the size of the continuum.

        We mention in passing that distinguishing relational
        structures up to primitive positive interdefinability, and therefore understanding the structure of $\Cll(X)$, 			has
        recently
        gained significant importance in theoretical computer
        science, more precisely for what is known as the Constraint
        Satisfaction Problem; see \cite{JBK}, \cite{Bodirsky}, or also the introduction in \cite{BCP}.

\subsection{Main results of this paper}
		In this paper, which is the journal version of a shorter article which appeared in the conference proceedings of the ROGICS'08 conference \cite{Pin08sublatticesOfTheLocal}, we are concerned with Problem~V of the survey paper~\cite{GP07}, which asks which sublattices $\Cll(X)$ contains. We
prove that every algebraic lattice with countably many compact elements is a complete sublattice of $\Cll(X)$  (Theorem~\ref{thm:algebraicIntervals}). We also show that $\Cll(X)$ is, with respect to size, not too far from such lattices as it as a join-preserving embedding into an algebraic lattice with countably many compacts (Theorem~\ref{thm:partialClones}). All this is done in Section~2. In Section~3, we prove that the lattice $M_{2^\omega}$ embeds into $\Cll(X)$ (Theorem~\ref{thm:mcont}), thereby providing the first example of a sublattice of $\Cll(X)$ which does not embed into any algebraic lattice with countably many compacts.

We also pose a series of open problems, one in Section~2 (Problem~\ref{prob:powerOmega}), and three more in an own open problems section, Section~4 (Problems~\ref{prob:finiteHeight}, \ref{prob:Mcont}, \ref{prob:monoid}).

\subsection{Acknowledgement} I am grateful to Marina Semenova for her critical remarks which forced me to find correct proofs of the theorems.

    \section{Algebraic lattices and the local clone lattice}
	We start our investigations by observing that whereas the lattice $\Cl(X)$ of all (not necessarily local)
        clones over $X$ is algebraic, it has been discovered
        recently in~\cite{GP07} that the local clone lattice $\Cll(X)$ is far from being
        so; in fact, the following has been shown.

        \begin{prop}
            The only compact element in the lattice $\Cll(X)$ is
            the clone of projections.
        \end{prop}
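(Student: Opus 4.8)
Write $\J$ for the clone of projections; it is the least element of $\Cll(X)$, and the least element of any complete lattice is trivially compact (if $\J\le\bigvee S$ then already $\J\le\bigvee\emptyset$). So the whole content of the statement is that every local clone $\C$ with $\C\supsetneq\J$ fails to be compact. The plan is to present each such $\C$ as the join, taken in $\Cll(X)$, of a directed family $\{\C_i\}_{i\in I}$ of local clones with $\C_i\subsetneq\C$ for every $i$. Since in a directed family every finite subfamily is bounded by a single member, such a presentation contradicts compactness at once: no single $\C_i$ can satisfy $\C\le\C_i$, yet compactness would force exactly that. Throughout I use that the join of local clones is the local closure $\cll{\cdot}$ of the union of the clones they generate, so that, via the interpolation reformulation of locality, a $k$-ary $g$ lies in $\bigvee_i\C_i$ precisely when every restriction of $g$ to a finite subset of $X^k$ is realised by some operation lying in one of the $\C_i$.

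For the construction, fix an operation $f\in\C$ that is not a projection, together with an exhaustion $X=\bigcup_{m\ge 1}Y_m$ of the base set by an increasing chain of finite subsets. The candidate is to let $\C_m$ be the local clone consisting of those operations of $\C$ whose deviation from a projection is confined to inputs drawn from $Y_m$ (operations that act like some $\pi^k_j$ on every tuple having a coordinate outside $Y_m$), so that $\C_1\subseteq\C_2\subseteq\cdots$ is an increasing, hence directed, family. Each $\C_m$ is a proper subclone of $\C$: because $X$ is infinite and $f$ is not a projection, $\C$ contains operations that differ from every projection arbitrarily far out, and such operations witness $\C_m\ne\C$. The content of the argument is then the reverse inclusion $\bigvee_m\C_m=\C$, which by the interpolation property amounts to showing that for every $g\in\C$ and every finite $A$ there is an operation \emph{in $\C$} that agrees with $g$ on $A$ while deviating from a projection only on tuples over $Y_m$ for some $m$.

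That last requirement is the main obstacle, and it is a genuine tension rather than a routine verification: the approximating operations must be genuine members of $\C$ (not merely of $\O$), so that the join does not overshoot $\C$, while the constraint defining $\C_m$ must be strong enough to exclude some operation of $\C$, so that $\C_m$ stays strictly below $\C$. Resolving this is exactly where the description $\C=\ppol\Inv(\C)$ should be brought in: one must arrange the family so that realising the finite behaviour of $g$ by an operation subject to the global restriction never forces one to leave the relational constraints $\Inv(\C)$, while a single relation still separates $\C_m$ from $\C$. I expect the honest proof to choose the approximating family through $\Inv(\C)$ rather than through the naive ``finitely supported perturbation'' family sketched above, precisely to keep the interpolants inside $\C$. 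Finally, the infiniteness of $X$ must be used essentially: over a finite base set the lattice of (local) clones is algebraic and every element is compact, so any correct argument is bound to collapse when $X$ is finite.
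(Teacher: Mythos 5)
Your opening observation that $\J$, the clone of projections, is compact because it is the least element is fine, and indeed the real content is non-compactness of every $\C\supsetneq\J$. But note first that the paper itself contains no proof of this proposition: it is quoted from the survey \cite{GP07}, and the paper only remarks that the proof given there also yields failure of upper continuity. Measured on its own terms, your plan has a structural flaw that is fatal, not merely technical: you insist that the witnessing family consist of proper local subclones $\C_m\subsetneq\C$ whose directed join is \emph{exactly} $\C$ (``so that the join does not overshoot $\C$''). Non-compactness requires no such thing: it only requires a family $\{\D_i\}$ with $\C\le\bigvee_i\D_i$ such that no finite subfamily's join contains $\C$, and this join may be strictly larger than $\C$. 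Your stronger requirement is in fact unsatisfiable in general, because $\Cll(X)$ has atoms. Fix distinct $a,b\in X$ and let $f(a)=b$ and $f(x)=x$ otherwise. Then $\cll{\{f\}}=\J\cup\{f\circ\pi^n_i: n\ge 1,\ 1\le i\le n\}$: this set is a clone since $f\circ f=f$, and it is locally closed since it has only finitely many members of each arity, so each $n$-ary fragment is a finite (hence closed) subset of $\On$. Moreover its unique proper local subclone is $\J$, because any subclone containing some $f\circ\pi^n_i$ contains $f$ (identify all variables) and hence everything. So this $\C$ is an atom of $\Cll(X)$ and is not the join of \emph{any} family of its proper subclones, directed or not — yet the proposition asserts precisely that this $\C$ is not compact. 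Any correct proof must therefore use overshooting families; for this atom one can take $\D_m:=\cll{\{f_m\}}$, where $f_m$ agrees with $f$ on the $m$-th member $A_m$ of an increasing exhaustion of $X$ by finite sets (with $a,b\in A_1$) and is constantly $b$ off $A_m$: every finite restriction of $f$ is realized by some $f_m$, so $f$ and hence $\C$ lie in $\bigvee_m\D_m$, while for a finite $F$ no term over $\{f_m:m\in F\}$ agrees with $f$ on $\{a,u\}$ once $u\notin A_{\max F}\cup\{b\}$ (projections fail at $a$; terms containing a generator take the value $b$ at $u$).

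There are secondary problems as well. Your $\C_m$ is not a clone: if $g\in\C$ acts like a projection on every tuple having a coordinate outside $Y_m$, a composition can collapse such a tuple onto a tuple lying entirely inside $Y_m$ (compose with projections repeating a $Y_m$-coordinate), where $g$ is unconstrained, so closure under composition fails; repairing the definition (e.g.\ requiring projection-like behaviour only on tuples \emph{all} of whose coordinates avoid $Y_m$) restores closure but destroys properness — for the atom above, $\C_m=\C$ as soon as $a\in Y_m$. Finally, you yourself flag the central verification $\bigvee_m\C_m=\C$ as unresolved and defer it to an unspecified argument via $\ppol\Inv(\C)$, so as written this is a plan with an acknowledged hole at its key step rather than a proof. (A small side error: over a finite base set the clone lattice is algebraic, but its compact elements are only the finitely generated clones, not all of them.)
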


	We remark that it follows from the proof of the preceding proposition given in~\cite{GP07} that $\Cll(X)$ is not even upper continuous.

        We now turn to sublattices of $\Cll(X)$. The following is a first easy
        observation which tells us that there is practically no hope that
        $\Cll(X)$ can ever be fully described, since it is believed
        that already the clone lattice over a three-element set is
        too complex to be completely understood.

        \begin{prop}\label{prop:finiteintolocal}
            Let $\Cl(A)$ be the lattice of all clones over a
            finite set $A$. Then $\Cl(A)$ is an
            isomorphic copy of an interval of $\Cll(X)$.
        \end{prop}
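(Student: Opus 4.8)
The plan is to realise the desired interval explicitly as a pair of polymorphism clones. Since $A$ is finite, the topology on each $A^{A^n}$ is discrete, so every clone on $A$ is automatically local; thus $\Cl(A)=\Cll(A)$, and it suffices to produce an interval of $\Cll(X)$ isomorphic to $\Cll(A)$. I would identify $X$ with $A\times Y$ for a fixed countably infinite $Y$, and write $p_A\colon X\to A$, $p_Y\colon X\to Y$ for the coordinate maps. The aim is to build two local clones $\C_1\subseteq\C_2$ whose members act \emph{coordinatewise}: arbitrarily on the $A$-coordinate, and as a projection on the $Y$-coordinate.

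Concretely, I would set $\C_2:=\ppol(\R)$ for the relation set $\R=\{\theta_A\}\cup\{\sigma_g : g\in Y^Y\}$, where $\theta_A=\ker p_A$ and $\sigma_g=\{((a,y),(b,g(y))) : a,b\in A,\ y\in Y\}\subseteq X^2$. Preservation of $\theta_A$ together with that of $\theta_Y:=\sigma_{\id}=\ker p_Y$ forces every $f\in\C_2$ to split as $f((a_i,y_i)_i)=(G(a_1,\dots,a_n),P(y_1,\dots,y_n))$ with $G\in\On$ an operation on $A$ and $P\in\On$ an operation on $Y$; preservation of all the $\sigma_g$ says precisely that $P$ commutes with every self-map of $Y$, and by the classical fact that the centraliser of the full transformation monoid is the clone of projections, $P$ must be a projection. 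Conversely every such coordinatewise operation preserves $\R$. Hence $\C_2$ consists exactly of the operations $f\leftrightarrow(G,j)$ with $G\in\O(A)$ and $Y$-action the $j$-th projection, and $\Phi\colon f\mapsto G$ is a surjective clone homomorphism onto $\O(A)$. I then put $\C_1:=\Phi^{-1}(J_A)$, the clone of ``double projections'' $(a_i,y_i)_i\mapsto(a_k,y_j)$; as the condition ``$G$ is a projection'' is finitary, $\C_1$ is local as well.

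It remains to check that $\Phi$ restricts to a lattice isomorphism from the interval $[\C_1,\C_2]$ onto $\Cl(A)$, and the one genuinely non-formal point — what I expect to be the main obstacle — is to rule out spurious clones, i.e.\ to show that every clone $\C$ with $\C_1\subseteq\C\subseteq\C_2$ is \emph{saturated}: a union of $\Phi$-fibres. This is exactly where the choice of $\C_1$ does the work, since it absorbs the leftover freedom in the projection index $j$. Given $f\leftrightarrow(G,j)$ in $\C$ and any other index $j'$, precomposing $f$ with the double projections $e_i\leftrightarrow(i,i)$ for $i\neq j$ and $e_j\leftrightarrow(j,j')$ — all lying in $\C_1\subseteq\C$ — yields $(G,j')$, so $\C$ contains every operation with the same $A$-action. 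Thus $\C=\Phi^{-1}(\Phi(\C))$; since $A$ is finite, $\Phi(\C)$ is a (local) clone on $A$ and $\C$ is its full preimage, hence itself local. Consequently $\Phi$ carries $[\C_1,\C_2]$ bijectively and order-isomorphically onto $[J_A,\O(A)]=\Cl(A)$, and all clones in between being local, this interval of $\Cll(X)$ coincides with the corresponding interval of $\Cl(X)$. The remaining points — that $\C_1,\C_2$ are clones, that $\Phi$ preserves composition, and that an inclusion-preserving bijection of intervals is a lattice isomorphism — are routine.
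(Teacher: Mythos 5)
Your proof is correct, but it takes a genuinely different route from the paper's. The paper leaves $A$ sitting inside $X$ and realises $\Cl(A)$ as the interval $[\,\C_0,\ppol(\{A\})\,]$, where $\ppol(\{A\})$ is the clone of all operations mapping powers of $A$ into $A$ (with \emph{no} constraint whatsoever outside $A$) and $\C_0$ is the set of all operations whose restriction to the appropriate power of $A$ is a projection; the isomorphism is restriction to $A$, and saturation of any clone $\D$ in the interval under the fibres of this restriction map is proved by a patching trick: given $m$-ary $f$ agreeing on $A^m$ with some $f'\in\D$, the operation $s$ that returns $y$ when $(x_1,\dots,x_m)\in A^m$ and $f(x_1,\dots,x_m)$ otherwise lies in $\C_0\subseteq\D$, and $f(x_1,\dots,x_m)=s(x_1,\dots,x_m,f'(x_1,\dots,x_m))\in\D$. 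You instead decompose $X=A\times Y$ and rigidify the $Y$-coordinate via the relations $\sigma_g$, and your saturation step re-indexes the $Y$-projection by composing with double projections; these are the two respective absorbing devices, and both work. As for what each buys: in your version every clone in the interval has finite $n$-ary fragments, so locality of everything in sight is immediate and the interval of $\Cll(X)$ visibly coincides with an interval of $\Cl(X)$; the price is an infinite family of invariant relations plus the centraliser fact that an operation on an infinite set commuting with all unary maps is a projection --- true and classical, but worth a one-line argument (commutation with maps fixing $y_1,\dots,y_n$ forces $P(y_1,\dots,y_n)\in\{y_1,\dots,y_n\}$, and evaluating at a tuple with pairwise distinct entries fixes the index uniformly). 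The paper needs only the single unary relation $A$, and locality of the clones in its interval follows from the single observation that agreement on the finite set $A^n$ can be tested by one finite interpolation. One cosmetic point: your justification that $\C_1$ is local (``the condition is finitary'') is cleanest stated as: every subset of $\C_2$ has finite, hence topologically closed, $n$-ary fragments.
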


        \begin{proof}
            Assume without loss of generality that $A\subseteq X$.
            Assign to every operation $f(x_1,\ldots,x_n)$ on $A$ a
            set of $n$-ary operations $\S_f\subseteq\On$ on $X$ as
            follows: An operation $g\in \On$ is an element of $\S_f$
            iff $g$ agrees with $f$ on $A^n$. Let $\sigma$ map every
            clone $\C$ on the base set $A$ to the set
            $\bigcup \{\S_f: f\in \C\}$. Then the following hold:
            \begin{enumerate}
                \item
                For every clone $\C$ on $A$, $\sigma(\C)$ is a local
                clone on $X$.
                \item $\sigma$ maps the clone of all operations on
                $A$ to $\ppol(\{A\})$.
                \item All local clones (in fact: all clones) which
                contain $\sigma(\{f: f \text{ is a}\\\text{projection on } A\})$ (i.e., which contain the local clone on $X$ which, via $\sigma$, corresponds to the
                clone of projections on $A$) and
                which are contained in $\ppol(\{A\})$ are of the
                form $\sigma(\C)$ for some clone $\C$ on $A$.
                \item $\sigma$ is one-to-one, and both $\sigma$ and its inverse are order preserving.
            \end{enumerate}
            (1) and (2) are easy verifications and left to the
            reader. To see (3), let $\D$ be any clone in the
            mentioned interval, and denote by $\C$ the set of all
            restrictions of operations in $\D$ to appropriate powers of
            $A$. Since $\D\subseteq\ppol(\{A\})$, all such
            restrictions are operations on $A$, and since $\D$ is
            closed under composition and contains all projections, so does
            $\C$. Thus, $\C$ is a clone on $A$. We claim
            $\D=\sigma(\C)$. By the definitions of $\C$ and $\sigma$, we have that $\sigma(\C)$ clearly contains
            $\D$. To see the less obvious inclusion, let
            $f\in\sigma(\C)$ be arbitrary, say of arity
            $m$. The restriction of $f$ to $A^m$ is an element of
            $\C$, hence there exists an $m$-ary $f'\in\D$ which has the same
            restriction to $A^m$ as $f$. Define
            $s(x_1,\ldots,x_m,y)\in\O^{(m+1)}$ by
            $$
                s(x_1,\ldots,x_m,y)=\begin{cases} y &,\text{ if}
                (x_1,\ldots,x_m)\in A^m\\ f(x_1,\ldots,x_m)& ,\text{
                otherwise.}\end{cases}
            $$
            Since $s$ behaves on $A^{m+1}$ like the projection
            onto the last coordinate, and since $\D$ contains $\sigma(\{f: f \text{ is } \text{ a } \text{ projection } \text{ on } A\})$, we infer $s\in\D$. But
            $f(x_1,\ldots,x_m)=s(x_1,\ldots,x_m,f'(x_1,\ldots,x_m))$,
            proving $f\in\D$.\\
            (4) is an immediate consequence of the
            definitions.
        \end{proof}

        It is known that all countable products of finite lattices
        embed into the clone lattice over a four-element set
        \cite{Bul94}, so by the preceding proposition they also
        embed into $\Cll(X)$. However, there are quite simple
        countable lattices which do not embed into the clone lattice
        over any finite set: The lattice $M_\omega$ consisting of a
        countably infinite antichain plus a smallest and a greatest element
        is an example \cite{Bul93}. We shall see now that the class
        of lattices embeddable into $\Cll(X)$ properly contains the
        class of lattices embeddable into the clone lattice over a
        finite set. In fact, the structure of $\Cll(X)$ is at least as
        complicated as the structure of any algebraic lattice with
        $\aleph_0$ compact elements. Before we prove this, observe that similarly to the local clones, the set of locally closed (that is: topologically closed in the space of all permutations on $X$) permutation groups on $X$ forms a complete lattice with respect to inclusion; denote this lattice by $\Grl(X)$. Moreover, the set of locally closed (that is: closed in the space $X^X$, where $X$ is discrete) transformation monoids on $X$ forms a complete lattice with respect to inclusion, which we denote by $\Monl(X)$. Note that the elements of $\Monl(X)$ are precisely the objects of the form $\C\cap\Oo$, where $\C\in\Cll(X)$.

        \begin{thm}\label{thm:algebraicIntervals}
            Every algebraic lattice with a countable number of
            compact elements is a complete sublattice of $\Cll(X)$. In fact, every such lattice is a complete sublattice of $\Grl(X)$ and of $\Monl(X)$.
        \end{thm}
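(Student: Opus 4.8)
The plan is to combine the standard ideal representation of algebraic lattices with an explicit, coordinatised construction of closed permutation groups whose lattice of local closures mirrors that representation. Every algebraic lattice $L$ is isomorphic to the lattice $\mathrm{Id}(S)$ of ideals (down-sets closed under finite joins) of the join-semilattice $S$ of its compact elements, where $S$ carries a least element; the isomorphism sends $a\in L$ to the set of compact $s\leq a$, and an ideal $I$ back to $\bigvee I$. In $\mathrm{Id}(S)$ arbitrary meets are intersections and arbitrary joins are the ideals generated by unions, and ideal generation has \emph{finite character}: $s\in\langle T\rangle$ iff $s\leq s_1\vee\cdots\vee s_k$ for finitely many $s_i\in T$. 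Since $L$ has only countably many compacts, $S$ is countable, so it suffices to embed $\mathrm{Id}(S)$, for a countable $\vee$-semilattice $S$ with $0$, as a complete sublattice. I would carry out the permutation-group case in full, as it is representative, and obtain the $\Monl(X)$ and $\Cll(X)$ statements from the same generators by parallel arguments.

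On a countable base set $X$ I would build one gadget for each $s\in S$ together with linking gadgets witnessing the joins $s\vee s'$, and define for each $s$ a permutation $\pi_s$ supported on the gadgets indexed by elements $\leq s$. The gadgets are organised around a distinguished family of points $\{p_t : t\in S\}\subseteq X$ and a \emph{fixed-point code}: the $\pi_s$ are arranged so that, for every $T\subseteq S$, the group $\cl{\pi_s: s\in T}$ moves $p_t$ exactly when $t\in\langle T\rangle$ and fixes it otherwise. The join-device must guarantee that once $\pi_{s_1},\dots,\pi_{s_k}$ are all present a suitable product moves $p_{s_1\vee\cdots\vee s_k}$, while \emph{no} combination of generators indexed by $T$ ever moves a point $p_t$ with $t\notin\langle T\rangle$; this independence is what encodes $S$ faithfully and nothing more. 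Setting $G_I$ to be the local closure of $\cl{\pi_s: s\in I}$ for each ideal $I$, the code gives injectivity and order-embedding in both directions at once: $I\subseteq J$ plainly yields $G_I\subseteq G_J$, and conversely if $G_I\subseteq G_J$ then every $p_s$ with $s\in I$ is moved by $G_J$, forcing $s\in\langle J\rangle=J$.

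It remains to check that $I\mapsto G_I$ preserves arbitrary meets and joins. For meets, $\bigcap_k G_{I_k}$ is again a closed group, and by the code any permutation in it fixes $p_t$ for every $t\notin\bigcap_k I_k$; the construction must then ensure that such a permutation already lies in $G_{\bigcap_k I_k}$, i.e.\ that the groups share no ``accidental'' elements beyond those the code forces. For joins, $\bigvee_k G_{I_k}$ is the closed group generated by $\bigcup_k G_{I_k}$; the inclusion into $G_{\langle\bigcup_k I_k\rangle}$ is immediate, and the reverse matches the finite character of ideal generation with finite generation of groups, since any $\pi_s$ with $s\in\langle\bigcup_k I_k\rangle$ has $s\leq s_1\vee\cdots\vee s_m$ for $s_i\in\bigcup_k I_k$, whence the join-device produces it.

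I expect the main obstacle to be the interaction of the \emph{local closure} with the coding. The lattice element attached to $G_I$ is the possibly infinite join $\bigvee I$, so one must show that each $G_I$ is already locally closed and that passing to the local closure never introduces a limit permutation moving some $p_t$ with $t\notin I$; equivalently, by the Galois description $\cll{\F}=\ppol\Inv(\F)$ specialised to our setting, that membership in $G_I$ is already determined by the orbit structure of $\cl{\pi_s:s\in I}$ on finite tuples. This is precisely where the gadgets must be built with enough homogeneity that their automorphism groups are controlled, and it is also exactly the ingredient that rules out accidental common elements and so secures meet-preservation. Finally, reading the same $\pi_s$ as generators of the closed transformation monoids, respectively of the local clones $\cll{\{\pi_s : s\in I\}}$, yields the $\Monl(X)$ and $\Cll(X)$ embeddings; the verifications run in parallel, the only extra care being that meets remain intersections with no additional shared operations, which again follows from the independence designed into the gadgets.
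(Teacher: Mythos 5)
Your central device, the ``fixed-point code,'' cannot exist for any semilattice that is not a chain, so the proposal fails before the gadgets are even specified. The obstruction is elementary: if a point $p$ is fixed by every permutation in a set $P$, then $p$ is fixed by every word in those permutations and by every pointwise limit of such words, hence by the entire locally closed group $\cll{P}$. Consequently the set of points moved by $G_T=\cll{\{\pi_s:s\in T\}}$ is exactly the union, over $s\in T$, of the sets of points moved by the individual generators. Applying your code to singletons, $\pi_s$ moves $p_t$ precisely when $t\le s$; hence for general $T$ the set of $t$ with $p_t$ moved by $G_T$ is the down-set $\{t:\exists s\in T\,(t\le s)\}$, never the ideal $\langle T\rangle$. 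Already for the four-element semilattice $\{0,a,b,a\vee b\}$ your requirements are contradictory: the singleton cases force $\pi_a$ and $\pi_b$ to fix $p_{a\vee b}$, whence $G_{\{a,b\}}$ fixes $p_{a\vee b}$ as well, while the code demands that it be moved. The same argument shows that your support condition ($\pi_s$ supported on gadgets indexed by elements $\le s$) is incompatible with join-generation. This is not a repairable detail: detecting joins requires an invariant subtler than fixed points, and building generators that both create joins and stay ``independent'' under local closure is precisely the hard content of the theorem --- which your sketch defers entirely to unspecified gadgets with mutually inconsistent wished-for properties.

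The paper sidesteps exactly this difficulty by importing the join-creation from a deep external result instead of constructing it by hand. By T\r{u}ma's theorem (Theorem~\ref{thm:tuma}), the given lattice is isomorphic to an interval $[\G_1,\G_2]$ in the subgroup lattice of a countable group $(X,+,-,0)$; each subgroup $H$ in the interval is then sent to its regular (Cayley) representation $\C_H=\{f_a:a\in H\}$, where $f_a(x)=a+x$. Regularity supplies everything your gadgets were supposed to: each $f_a$ is determined by its value at the single point $0$, so $\C_H$ is automatically locally closed (the closure adds nothing), words in generators are transparent ($f_a\circ f_b=f_{a+b}$, so joins of the $\C_H$ in $\Grl(X)$ correspond to subgroup joins), and meets are literal intersections with no accidental coincidences, since $f_a=f_b$ iff $a=b$. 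A proof along your lines would in effect have to reprove a representation theorem of comparable depth to T\r{u}ma's.
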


	    It is clear that the lattice $\Monl(X)$ of locally closed transformation monoids embeds completely into $\Cll(X)$ via the assignment which sends every local monoid to the local clone it generates. Local clones arising in this way will contain only operations which depend on at most one variable: In fact, the operations of such a local clone are exactly the functions of the monoid, with (possibly) some fictitious variables added. Therefore, the statement of the theorem about the lattice $\Monl(X)$ implies the statement about $\Cll(X)$.

        To prove Theorem \ref{thm:algebraicIntervals}, we cite
        the following deep theorem from
        \cite{Tum89subgroupLattices}.

        \begin{thm}\label{thm:tuma}
            Every algebraic lattice with a countable number of
            compact elements is isomorphic to an interval in the
            subgroup lattice of a countable group.
        \end{thm}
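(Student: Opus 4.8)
The plan is to reduce the statement to a representation problem for transitive group actions and then to solve that problem by an explicit inductive amalgamation. Write $L$ for the given lattice and let $S$ be its join-semilattice of compact elements (with least element $0$); by hypothesis $S$ is countable, and $L$ is isomorphic to the ideal lattice $\operatorname{Id}(S)$, whose compact elements are exactly the principal ideals and hence form a semilattice isomorphic to $S$. The first step rests on a classical fact from permutation group theory: if a group $G$ acts transitively on a set $\Omega$ with point stabilizer $G_\alpha$, then the assignment sending an overgroup $K$ of $G_\alpha$ to the $G$-invariant equivalence relation on $\Omega$ whose class through $\alpha$ is the orbit $K\alpha$ is a lattice isomorphism from the interval $[G_\alpha,G]$ in the subgroup lattice of $G$ onto the lattice $\operatorname{Con}_G(\Omega)$ of $G$-invariant equivalence relations (block systems) on $\Omega$. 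Consequently, to realise $L$ as an interval in the subgroup lattice of a countable group it suffices to build a countable group $G$ acting transitively on a countable set $\Omega$ with $\operatorname{Con}_G(\Omega)\cong L$; the desired interval then lies between a point stabiliser and $G$, and $G$ is automatically countable.

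It thus remains to represent $L$ as the lattice of $G$-congruences of a transitive $G$-set. I would exploit that $\operatorname{Con}_G(\Omega)$ is always algebraic, its compact elements being the finitely generated $G$-congruences, and that, since $\Omega$ is countable, there are only countably many of these. The target is therefore to construct $\Omega$ and $G$ so that the semilattice of compact $G$-congruences is isomorphic to $S$; because an algebraic lattice is the ideal lattice of its semilattice of compacts, this forces $\operatorname{Con}_G(\Omega)\cong\operatorname{Id}(S)\cong L$. The construction I would carry out is a union of an increasing chain of finite \emph{partial} permutation structures. One fixes enumerations of the elements and of the generating join-relations of $S$, together with an enumeration of all pairs of points that might spuriously generate a congruence. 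At each finite stage one adjoins finitely many new points and new partial permutations of $\Omega$ designed to: (i) make each generator of $S$ appear as the $G$-congruence generated by a designated pair of points; (ii) force the prescribed joins among these principal congruences to hold; and (iii) for every pair not meant to lie in a common nontrivial congruence, add group elements collapsing the unintended block system. Taking $G$ to be generated by all the permutations introduced, and $\Omega$ the union of the points, yields the required countable transitive $G$-set.

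The main obstacle is balancing step (iii) against (i)--(ii): it is easy to produce enough $G$-congruences to embed $S$, but one must simultaneously prevent any extra $G$-invariant equivalence relation from surviving, \emph{without} accidentally identifying congruences that $S$ keeps apart. The technical heart is therefore the analysis of the principal $G$-congruence generated by a pair $(a,b)$: adjoining a group element $g$ with $ga=c$ and $gb=d$ forces $(a,b)$ and $(c,d)$ into exactly the same $G$-congruences, so the amalgamation must introduce such elements precisely when two principal congruences are meant to merge and must avoid or compensate for them otherwise. Establishing that the finite partial permutation structures can always be extended to meet the next requirement in the enumeration while preserving every commitment already made --- that is, that these partial structures genuinely amalgamate --- is where the real effort of Tůma's argument concentrates. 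Granting this amalgamation property, a back-and-forth bookkeeping over the countably many requirements guarantees that in the limit the compact $G$-congruences form exactly $S$, which by the reduction above completes the proof.
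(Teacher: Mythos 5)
You should first note that the paper contains no proof of this statement: it is quoted as a ``deep theorem'' and cited from T\r{u}ma's paper \emph{Intervals in subgroup lattices of infinite groups} (J.\ Algebra 125 (1989)), so it is used as a black box and there is no in-paper argument to compare yours against. Your opening reduction is correct and classical: for a transitive $G$-set $\Omega$ with point stabiliser $G_\alpha$, the map sending $K\in[G_\alpha,G]$ to the block system whose block through $\alpha$ is $K\alpha$ is a lattice isomorphism onto the lattice of $G$-invariant equivalence relations on $\Omega$; this lattice is algebraic with the finitely generated $G$-congruences as its compacts, of which there are only countably many when $\Omega$ is countable; and an algebraic lattice is recovered as the ideal lattice of its semilattice of compacts. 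So it does suffice to realise the countable join-semilattice $S$ of compacts of $L$ as the semilattice of compact $G$-congruences of a countable transitive $G$-set.

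The gap is that everything after this reduction --- which is where the entire mathematical content of the theorem lives --- is asserted rather than proved. Your proof stands or falls on the sentence beginning ``Granting this amalgamation property,'' and no construction is actually given: you do not define the class of finite partial permutation structures, do not prove it admits extensions meeting requirements (i)--(iii), and, most importantly, do not show the three requirements are jointly satisfiable. Your own analysis explains why this is the crux: adjoining $g$ with $ga=c$, $gb=d$ forces the principal congruences of $(a,b)$ and $(c,d)$ to coincide forever after, so the collapsing moves of (iii) can destroy the separations that (i) must preserve, and refusing to collapse leaves spurious congruences alive. There is a further unaddressed point: a $G$-invariant equivalence relation on the limit $\Omega$ is not simply a directed union of congruences of the finite approximations, because $G$ grows along with $\Omega$; one must prove that every congruence of the limit is controlled at finite stages, which your ``back-and-forth bookkeeping'' does not establish. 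In short, what you have is a plausible strategy outline that correctly identifies the standard reduction and the shape of a forcing-style construction, but the heart of T\r{u}ma's theorem --- the delicate combinatorial construction occupying most of his paper --- is missing, so this cannot be accepted as a proof.
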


        \begin{proof}[Proof of Theorem \ref{thm:algebraicIntervals}]
			We prove the statement about $\Grl(X)$.
            Let $\L$ be the algebraic lattice to be embedded into
            $\Grl(X)$. Let $\X=(X,+,-,0)$ be the group provided by
            Theorem~\ref{thm:tuma}. Let $[\G_1,\G_2]$ be the interval in the
            subgroup lattice of $\X$ that
            $\L$ is isomorphic to. We will assign to every group in the interval its Cayley representation as a group of permutations on $X$: That is,
            for every $a\in X$, define a unary operation $f_a\in\Oo$ by
            $f_a(x)=a+x$. Clearly, we have
            $f_a(f_b(x))=a+b+x=f_{a+b}(x)$ for all $a,b\in X$.
             Define a mapping $\mu:[\G_1,\G_2]\rightarrow \Grl(X)$ sending
            every group
            $\H=(H,+,-,0)$ in the interval to $\C_H:=\{f_a:a\in H\}$.  It is easy to see (and folklore) that the $\C_H$ are permutation groups; we only have to check that they are locally closed. To see this, let
            $f\in\Oo$ be an element of the topological (local) closure of ${\C_H}$ in the full symmetric group on $X$. We claim $f\in \C_H$.
			Indeed, observe
            that $f$ agrees with some $f_a\in \C_H$
            on the finite set
            $\{0\}\subseteq X$. Suppose that there is $b\in X$ such
            that $f(b)\neq f_a(b)=a+b$. Then take any $f_c\in \C_H$ such that $f$ and $f_c$ agree on $\{0,b\}$.
			But then
            $c=f_c(0)=f(0)=f_a(0)=a$, and thus $f(b)=f_c(b)=f_a(b)\neq f(b)$, an obvious contradiction. Hence,
            $f=f_a\in \C_H$ and we are done.

            With the explicit description of the $\C_H$ and given that they are indeed closed permutation groups, a straightforward check
            shows that $\mu$ preserves arbitrary meets and joins.
            
            The proof for the embedding into $\Monl(X)$ is identical. By the discussion above the statement about $\Cll(X)$ follows.
        \end{proof}

        Since in particular, $\Cll(X)$ contains $M_\omega$ as a
        sublattice, and since according to \cite{Bul93}, $M_\omega$ is not a sublattice of the
        clone lattice over any finite set, we have the following
        corollary to Theorem \ref{thm:algebraicIntervals}.

        \begin{cor}\label{cor:localnotinfinite}
            $\Cll(X)$ does not embed into the clone lattice over any
            finite set.
        \end{cor}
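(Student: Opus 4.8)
The plan is to obtain the corollary by transitivity from Theorem~\ref{thm:algebraicIntervals} and the cited result of~\cite{Bul93}. First I would check that $M_\omega$ satisfies the hypotheses of Theorem~\ref{thm:algebraicIntervals}. Recall that $M_\omega$ is the height-two lattice consisting of a least element, a greatest element, and a countably infinite antichain of atoms in between, in which any two distinct atoms meet at the bottom and join to the top. This lattice is complete, and it is algebraic: since any two distinct atoms already join to the greatest element, every one of its countably many elements is a finite join of atoms and is therefore compact. Hence $M_\omega$ is an algebraic lattice with countably many compact elements, and Theorem~\ref{thm:algebraicIntervals} yields a complete sublattice of $\Cll(X)$ isomorphic to $M_\omega$; in particular, $M_\omega$ embeds into $\Cll(X)$ as a lattice.

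Next I would argue by contradiction. Suppose that $\Cll(X)$ admitted a lattice embedding into $\Cl(A)$ for some finite set $A$. Composing the embedding of $M_\omega$ into $\Cll(X)$ obtained above with this hypothetical embedding would produce a lattice embedding of $M_\omega$ into $\Cl(A)$, that is, an isomorphic copy of $M_\omega$ as a sublattice of the clone lattice over a finite set. This contradicts the theorem of~\cite{Bul93} that $M_\omega$ is a sublattice of no clone lattice over a finite set, and so no such embedding of $\Cll(X)$ can exist.

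I expect no genuine obstacle here: all the mathematical content is imported through Theorem~\ref{thm:algebraicIntervals} and~\cite{Bul93}, and the remaining work is purely formal. The only points requiring a moment's care are that a complete sublattice embedding is in particular a lattice embedding (being closed under arbitrary meets and joins, it preserves the finite ones), and that lattice embeddings compose, so that a sublattice of a sublattice is again a sublattice. With these observations the contradiction is immediate.
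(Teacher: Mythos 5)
Your proof is correct and is essentially identical to the paper's argument: the paper also derives the corollary by noting that $M_\omega$ is an algebraic lattice with countably many compact elements, hence a sublattice of $\Cll(X)$ by Theorem~\ref{thm:algebraicIntervals}, while by \cite{Bul93} it is not a sublattice of the clone lattice over any finite set. Your explicit verification that $M_\omega$ is algebraic with all elements compact, and the remark about composing embeddings, are exactly the routine details the paper leaves implicit.
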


        Observe also that Theorem \ref{thm:algebraicIntervals} is a
        strengthening of Proposition \ref{prop:finiteintolocal} in
        so far as the clone lattice over a finite set is an example
        of an algebraic lattice with countably many compact
        elements. However, in that proposition we obtain an
        embedding as an interval, not just as a complete sublattice.

        What about other lattices, i.e., lattices which are more
        complicated or larger than algebraic lattices with countably
        many compact elements? We will now find a restriction
        on which lattices can be sublattices of $\Cll(X)$.

        A \emph{partial clone of finite operations} on $X$ is a set of partial
        operations of finite domain
        on $X$ which contains all restrictions of the projections to finite domains and which is closed
        under composition. A straightforward verification shows that the set of partial clones of finite operations on $X$
        forms a complete algebraic lattice $\Clp(X)$, the compact elements of
        which are precisely the finitely generated partial clones; in particular, the number of compact elements of $\Clp(X)$ is countable.

        \begin{thm}\label{thm:partialClones}
            The mapping $\sigma$ from $\Cll(X)$ into $\Clp(X)$ which sends every $\C\in\Cll(X)$
            to the partial clone of all restrictions of its operations
            to finite
            domains is one-to-one and preserves arbitrary joins.
        \end{thm}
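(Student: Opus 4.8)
The plan is to verify three things in turn, the last being where the real work lies: that $\sigma$ lands in $\Clp(X)$, that it is injective, and that it preserves arbitrary joins. Well-definedness is routine: for $\C\in\Cll(X)$ the set $\sigma(\C)$ of all finite restrictions of members of $\C$ plainly contains all finite restrictions of projections, and it is closed under composition because restricting a composite $f(g_1,\dots,g_n)\in\C$ to a finite domain reproduces exactly the partial composite of the corresponding finite restrictions of $f,g_1,\dots,g_n$. For injectivity I would argue directly from local closedness. If $\C_1\neq\C_2$, pick $f\in\C_1\setminus\C_2$, say $n$-ary. Since $\C_2$ is local and $f\nin\C_2$, there must be a finite $A\subseteq X^n$ on which no $n$-ary operation of $\C_2$ agrees with $f$ (otherwise the interpolation property would force $f\in\C_2$). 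Then $f\rest A\in\sigma(\C_1)$, while $f\rest A\nin\sigma(\C_2)$: were it equal to $h\rest A$ for some $h\in\C_2$, that $h$ would agree with $f$ on $A$, contradicting the choice of $A$. Hence $\sigma(\C_1)\neq\sigma(\C_2)$.

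For joins, monotonicity of $\sigma$ gives the easy inclusion: each $\C_i\subseteq\bigvee_j\C_j$ yields $\sigma(\C_i)\subseteq\sigma(\bigvee_j\C_j)$, so $\bigvee_i\sigma(\C_i)\subseteq\sigma(\bigvee_i\C_i)$. The reverse inclusion is the crux. Write $\C^*=\cl{\bigcup_i\C_i}$ for the (not necessarily local) clone generated by the union, so that $\bigvee_i\C_i=\cll{\C^*}$, and recall that the join $\bigvee_i\sigma(\C_i)$ in $\Clp(X)$ is just the partial clone generated by $\bigcup_i\sigma(\C_i)$, hence closed under composition and containing all finite restrictions of projections. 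The interpolation property describes the local closure explicitly: $f\in\cll{\C^*}$ holds exactly when, for every finite $A$, some honest composite $g\in\C^*$ agrees with $f$ on $A$. This is the step where local closedness is indispensable, since a general element of the local join is only a local limit of composites, not a composite itself.

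So, given $f\in\bigvee_i\C_i$ of arity $n$ and a finite $D\subseteq X^n$, I would choose $g\in\C^*$ with $g\rest D=f\rest D$ and then show $g\rest D\in\bigvee_i\sigma(\C_i)$ by induction on a term witnessing $g\in\C^*$. If $g$ is a projection, $g\rest D$ lies in every partial clone by definition; if $g\in\C_j$ is a generator, then $g\rest D\in\sigma(\C_j)\subseteq\bigvee_i\sigma(\C_i)$. For $g=h(g_1,\dots,g_r)$ with $h\in\C_j$ of arity $r$, the inductive hypothesis gives $g_1\rest D,\dots,g_r\rest D\in\bigvee_i\sigma(\C_i)$; restricting $h$ to the finite set $E=\{(g_1(d),\dots,g_r(d)):d\in D\}$ puts $h\rest E$ into $\sigma(\C_j)$, and the partial composite $(h\rest E)(g_1\rest D,\dots,g_r\rest D)$ is defined on all of $D$ and equals $g\rest D$. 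Since $\bigvee_i\sigma(\C_i)$ is closed under composition, $g\rest D$ lies in it, whence $f\rest D=g\rest D\in\bigvee_i\sigma(\C_i)$. As $f$ and $D$ were arbitrary this gives $\sigma(\bigvee_i\C_i)\subseteq\bigvee_i\sigma(\C_i)$, completing the equality.

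The main obstacle is exactly the transition in the reverse inclusion from the local-join element $f$, which a priori only interpolates composites on finite sets, to a concrete composite $g$ that can be dismantled term-by-term; the interpolation property is precisely what supplies such a $g$ on the prescribed finite domain $D$. Once $g$ is in hand, the remainder is the bookkeeping of restricting each building block to the finitely many tuples at which it is actually evaluated in computing $g$ on $D$, which I expect to go through without difficulty.
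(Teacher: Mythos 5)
Your proof is correct, and on two of the three components it coincides with the paper's: well-definedness is dismissed as routine there too, your injectivity argument (local closedness yields a finite set $A$ on which no member of the smaller clone interpolates $f$, and $f\rest_A$ then separates the images) is literally the paper's, and your easy inclusion $\bigvee_i\sigma(\C_i)\subseteq\sigma\bigl(\bigvee_i\C_i\bigr)$ via order-preservation is also the paper's first step. The genuine difference is in the converse inclusion $\sigma\bigl(\bigvee_i\C_i\bigr)\subseteq\bigvee_i\sigma(\C_i)$, which you rightly identify as the crux: you use the interpolation property to replace $f\in\bigvee_i\C_i$ on a prescribed finite set $D$ by an honest composite $g\in\cl{\bigcup_i\C_i}$, and then induct on a term representing $g$, restricting each constituent operation to the finitely many tuples at which it is evaluated, so that $f\rest_D=g\rest_D$ is exhibited as a composite of members of $\bigcup_i\sigma(\C_i)$. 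The paper's printed proof does not contain this step. After the order-preservation argument, it takes $f\in\sigma(\C)\vee\sigma(\D)$, extends the partial operations composing $f$ to total operations in $\C\cup\D$, composes those extensions, and concludes $f\in\sigma(\C\vee\D)$; but that establishes $\sigma(\C)\vee\sigma(\D)\subseteq\sigma(\C\vee\D)$ a second time, not its converse, so as written the paper proves only one of the two needed inclusions. The direction that genuinely requires local closedness is exactly the one you supply, and your interpolation-plus-term-induction argument is what is needed to complete the equality (and is presumably what the author intended). Conversely, the paper's extension argument is a concrete, self-contained alternative to the order-preservation argument for the easy inclusion, but it yields nothing beyond it; your write-up subsumes the paper's and closes its gap.
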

        \begin{proof}
            It is obvious that $\sigma(\C)$ is a partial clone of
            finite operations, for all local (in fact: also non-local) clones $\C$.\\
            Let $\C,\D\in\Cll(X)$ be distinct. Say without loss of generality that there is an
            $n$-ary
            $f\in\C\setminus\D$; then since $\D$ is locally closed, there exists some finite set
            $A\subseteq X^n$ such that there is no $g\in \D$ which
            agrees with $f$ on $A$. The restriction of $f$ to $A$
            then witnesses that $\sigma(\C)\neq\sigma(\D)$.\\
            We show that
            $\sigma(\C)\vee\sigma(\D)=\sigma(\C\vee\D)$; the proof for arbitrary
            joins works the same way. It follows directly from the
            definition of $\sigma$ that it is
            order-preserving. Thus, $\sigma(\C\vee\D)$ contains
            both $\sigma(\C)$ and $\sigma(\D)$ and hence also their
            join. Now let $f\in \sigma(\C)\vee\sigma(\D)$. This
            means that it is a composition of partial operations in
            $\sigma(\C)\cup\sigma(\D)$. All partial operations used
            in this composition have extensions to operations in
            $\C$ or $\D$, and if we compose these extensions in the
            same way as the partial operations, we obtain an
            operation in $\C\vee\D$ which agrees with $f$ on the
            domain of the latter. Whence, $f\in\sigma(\C\vee\D)$.
        \end{proof}

        \begin{cor}\label{cor:intopowersetofomega}
            $\Cll(X)$ embeds as a suborder into the power set of $\omega$. In
            particular,  the size of $\Cll(X)$ is $2^{\aleph_0}$, and $\Cll(X)$ does not contain any uncountable
            ascending or descending chains.
        \end{cor}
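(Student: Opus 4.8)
The plan is to derive everything from Theorem~\ref{thm:partialClones} together with the fact, recorded just before that theorem, that $\Clp(X)$ is an algebraic lattice with only countably many compact elements. The first step is to upgrade the map $\sigma$ of Theorem~\ref{thm:partialClones} from a join-preserving injection to a genuine \emph{order-embedding}: a join-preserving injection automatically reflects the order, since $\sigma(\C)\subseteq\sigma(\D)$ forces $\sigma(\C\vee\D)=\sigma(\D)$, whence $\C\vee\D=\D$ by injectivity, i.e.\ $\C\subseteq\D$. Thus $\Cll(X)$ already embeds as a suborder into $\Clp(X)$, and it remains to embed $\Clp(X)$ as a suborder into the power set of $\omega$.

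For the second step I would invoke the standard representation of an algebraic lattice by its compact elements. Writing $K$ for the (countable) set of compact elements of $\Clp(X)$, the assignment $a\mapsto\{k\in K: k\leq a\}$ is order-preserving into $\mathcal{P}(K)$, and it reflects the order because every element of an algebraic lattice is the join of the compact elements below it: if $\{k\in K:k\leq a\}\subseteq\{k\in K:k\leq b\}$, then $a=\bigvee\{k\in K:k\leq a\}\leq b$. Hence this assignment is an order-embedding of $\Clp(X)$ into $\mathcal{P}(K)\cong\mathcal{P}(\omega)$, and composing it with $\sigma$ yields the desired suborder embedding of $\Cll(X)$ into $\mathcal{P}(\omega)$, proving the first assertion.

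The ``in particular'' claims then follow quickly. The suborder embedding gives $|\Cll(X)|\leq 2^{\aleph_0}$; for the reverse inequality I would note that $\mathcal{P}(\omega)$ is itself an algebraic lattice whose compact elements are exactly the finite subsets of $\omega$, hence countably many, so by Theorem~\ref{thm:algebraicIntervals} it embeds (even completely) into $\Cll(X)$, giving $|\Cll(X)|\geq 2^{\aleph_0}$. For the chain statement it suffices, by the suborder embedding, to show that $\mathcal{P}(\omega)$ contains no uncountable ascending or descending well-ordered chain. Given a strictly increasing chain $(A_\alpha)_{\alpha<\omega_1}$, I would choose for each $\alpha$ an element $n_\alpha\in A_{\alpha+1}\setminus A_\alpha$; for $\alpha<\beta$ one has $n_\alpha\in A_{\alpha+1}\subseteq A_\beta$ while $n_\beta\notin A_\beta$, so the $n_\alpha$ are pairwise distinct, producing an injection of $\omega_1$ into $\omega$, a contradiction. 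The descending case is symmetric, choosing instead $n_\alpha\in A_\alpha\setminus A_{\alpha+1}$.

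The mathematical content here is light, being mostly an assembly of standard facts about algebraic lattices, and I do not expect a genuine obstacle. The one point that must be handled with care is the use of $\sigma$ as an order-embedding that \emph{reflects} inclusions rather than merely a join-homomorphism, since it is exactly this property that lets suborders — and therefore the absence of uncountable well-ordered and co-well-ordered chains — descend from $\mathcal{P}(\omega)$ and $\Clp(X)$ back to $\Cll(X)$; the witness-choosing argument in the final paragraph is the only step with any combinatorial substance.
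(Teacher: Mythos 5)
Your proof is correct and follows essentially the same route as the paper: order-embed $\Cll(X)$ into $\Clp(X)$ via the map $\sigma$ of Theorem~\ref{thm:partialClones}, embed $\Clp(X)$ into the power set of a countable set, read off the upper cardinality bound and the chain condition there, and get the lower bound $2^{\aleph_0}$ from Theorem~\ref{thm:algebraicIntervals}. The one step you do differently is the middle one: the paper never invokes the compact elements of $\Clp(X)$, but simply observes that a partial clone of finite operations literally \emph{is} a set of finite partial operations, of which there are only countably many, so $\Clp(X)$ sits inside $\mathcal{P}(\omega)$ as a suborder with no further argument; your representation $a\mapsto\{k\in K : k\leq a\}$ is a correct, slightly less direct substitute that buys nothing extra here, though it would also work for an abstract algebraic lattice with countably many compacts. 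Your explicit verifications --- that a join-preserving injection between lattices automatically reflects order, and the $\omega_1$-witness argument ruling out uncountable well-ordered ascending or descending chains in $\mathcal{P}(\omega)$ --- are exactly the details the paper leaves implicit, and both are sound.
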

        \begin{proof}
			The number of partial operations with finite
        domain on $X$ is countable; therefore, partial clones of finite operations can be considered as
        subsets of $\omega$, which proves the first statement. In particular, $\Cll(X)$ cannot have more than $2^{\aleph_0}$ elements, and the fact that all algebraic lattices with at most
            $\aleph_0$ compact elements embed into $\Cll(X)$ shows
            that it must contain at least $2^{\aleph_0}$ elements. The third statement is an immediate consequence of the first one.
        \end{proof}

 It also follows from Theorem~\ref{thm:partialClones} that for all sets of local clones $S\subseteq \Cll(X)$, there exists  a countable $S'\subseteq S$ such that $\bigvee S=\bigvee S'$.

		It might be interesting to observe at this point that $\Clp(X)$ is universal for the class of algebraic lattices with countably many compact elements: Denote by $\Monp(X)$ the lattice of all sets of finite partial unary operations on $X$ which are closed under composition, and which contain all restrictions of the identity. In other words, the elements of $\Monp(X)$ are precisely the unary fragments of the elements of $\Clp(X)$. As before with local monoids and local clones, $\Monp(X)$ embeds naturally into $\Clp(X)$ by adding fictitious variables to the partial operations of an element of $\Monp(X)$. We have:

		\begin{prop}
			Every algebraic lattice with a countable number of
            compact elements is a complete sublattice of $\Clp(X)$. In fact, every such lattice is a
			complete sublattice of $\Monp(X)$.
		\end{prop}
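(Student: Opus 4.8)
The plan is to mimic the proof of Theorem~\ref{thm:algebraicIntervals}, but to replace the total permutations of the Cayley representation by their finite restrictions, exactly as the map $\sigma$ of Theorem~\ref{thm:partialClones} passes from operations to their finite restrictions. Since, as noted before the proposition, $\Monp(X)$ embeds completely into $\Clp(X)$ by adding fictitious variables, it suffices to prove the statement about $\Monp(X)$. So let $\L$ be an algebraic lattice with countably many compact elements, let $\X=(X,+,-,0)$ be the countable group and $[\G_1,\G_2]$ the interval in its subgroup lattice furnished by Theorem~\ref{thm:tuma}, with $\L\cong[\G_1,\G_2]$; for $a\in X$ let $f_a\in\Oo$ be the Cayley permutation $f_a(x)=a+x$, so that $f_a\circ f_b=f_{a+b}$. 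To each subgroup $K$ in the interval I associate
\[
    \P_K:=\{f_a\rest D : a\in K,\ D\subseteq X\text{ finite}\},
\]
the set of all finite restrictions of the left translations by elements of $K$, and set $\nu(K)=\P_K$. A short computation shows $\P_K\in\Monp(X)$: it contains every finite restriction of $\id=f_0$, and it is closed under composition, since $(f_a\rest D_1)\circ(f_b\rest D_2)$ is the restriction of $f_{a+b}$ to a finite domain and $a+b\in K$.

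The map $\nu$ is injective and preserves arbitrary meets, both by the rigidity of translations: for any nonempty finite $D$ the partial map $f_a\rest D$ determines $a$ uniquely, because $a+x_0=f_a(x_0)$ pins down $a$ for any $x_0\in D$, while the empty restriction lies in every $\P_K$. Hence the one-point map $x_0\mapsto a$ belongs to $\P_K$ exactly when $a\in K$, which gives injectivity; and a partial map with nonempty domain lies in $\bigcap_i\P_{K_i}$ precisely when the unique $a$ it determines lies in every $K_i$, i.e.\ in $\bigcap_i K_i=\bigwedge_i K_i$. Since meets in $\Monp(X)$ are intersections, this yields $\nu(\bigwedge_i K_i)=\bigcap_i\P_{K_i}=\bigwedge_i\nu(K_i)$.

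The crux is join preservation. As $\nu$ is clearly order preserving, $\nu(\bigvee_i K_i)\supseteq\bigvee_i\nu(K_i)$, and the join in $\Monp(X)$ is the closure of the union under composition. For the reverse inclusion, recall that every element of $\bigvee_i K_i=\cl{\bigcup_i K_i}$ is a finite product $a=c_1+\dots+c_k$ with each $c_j$ lying in some $K_{i_j}$ (the $K_i$ being subgroups, no separate inverses are needed), so $f_a=f_{c_1}\circ\dots\circ f_{c_k}$. Given a finite $D$, I would put $E_k=D$ and $E_{j-1}=f_{c_j}(E_j)$; then each $g_j:=f_{c_j}\rest E_j$ is a finite partial map in $\P_{K_{i_j}}$, and since $f_{c_j}$ is a bijection carrying $E_j$ onto $E_{j-1}=\dom(g_{j-1})$, the partial composite $g_1\circ\dots\circ g_k$ is defined on all of $D$ and equals $f_a\rest D$. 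Thus $f_a\rest D$ lies in the composition closure of $\bigcup_i\P_{K_i}$, giving $\nu(\bigvee_i K_i)\subseteq\bigvee_i\nu(K_i)$.

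The only delicate point is the domain bookkeeping in this last step: one must choose the finite domains $E_j$ so that no argument escapes a domain along the chain of partial maps, which is exactly what $E_{j-1}=f_{c_j}(E_j)$ guarantees. Everything else is the routine verification that $\nu$ is a complete lattice embedding; composing $\nu$ with the complete embedding of $\Monp(X)$ into $\Clp(X)$ noted above then delivers the statement about $\Clp(X)$.
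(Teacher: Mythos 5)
Your proof is correct and takes essentially the same route as the paper: the paper also embeds $\L$ by taking all finite restrictions of the Cayley translations (formally, it applies the map $\sigma$ of Theorem~\ref{thm:partialClones} to the image of the map $\mu$ from Theorem~\ref{thm:algebraicIntervals}), and it proves meet preservation by exactly your rigidity argument ($p(c)=a+c=b+c$ forces $a=b$). The only difference is that you re-derive join preservation by hand, via the product decomposition $a=c_1+\dots+c_k$ and the domain bookkeeping $E_{j-1}=f_{c_j}(E_j)$, whereas the paper simply invokes the already-established fact that $\sigma$ preserves arbitrary joins (Theorem~\ref{thm:partialClones}); your version is a correct, self-contained specialization of that argument.
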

		\begin{proof}
			Let $\L$ be the lattice to be embedded, and embed it into $\Monl(X)$ as in the proof of Theorem~\ref{thm:algebraicIntervals} via the mapping $\mu$ of that proof. Then apply the mapping $\sigma$ from Theorem~\ref{thm:partialClones} to the local monoids thus obtained. This obviously gives us a join-preserving mapping from $\L$ into $\Monp(X)$. We claim that $\sigma$, restricted to monoids in the image of $\mu$, preserves arbitrary meets: Let $\C_H, \C_K$ be two such monoids. Clearly, $\sigma(\C_H\cap \C_K)$ is contained in $\sigma(\C_H)\cap \sigma(\C_K)$. Now let $p\in \sigma(\C_H)\cap \sigma(\C_K)$. Then, using the notation from the proof of Theorem~\ref{thm:algebraicIntervals}, there exist a finite set $A\subseteq X$, $a\in H$ and $b\in K$ such that $p=f_a\rest_A= f_b\rest_A$. We may assume that $A$ is non-void; pick $c\in A$. We have $p(c)=f_a(c)=a+c=f_b(c)=b+c$. Hence, $a=b$, so $f_a=f_b\in \C_H\cap\C_K$, and $p\in\sigma(\C_H\cap \C_K)$. Larger meets work the same way, and thus we have a complete lattice embedding of $\L$ into $\Monp(X)$.
		\end{proof}

        Until today, no other restriction to embeddings into
        $\Cll(X)$ except for Theorem~\ref{thm:partialClones} is known, and we ask:

        \begin{prob}\label{prob:powerOmega}
            Does every lattice which has a complete join-embedding into an algebraic lattice with
			countably many compacts have a lattice embedding into $\Cll(X)$?
        \end{prob}

        It seems, however, difficult to embed even the simplest
        lattices which are not covered by Theorem
        \ref{thm:algebraicIntervals} into $\Cll(X)$. In the conference version of this paper, \cite{Pin08sublatticesOfTheLocal}, the following problem was posed:

        \begin{quote}
            Does the lattice $M_{2^\omega}$, which consists of an antichain of length $2^{\omega}$ plus a smallest and a largest element, embed
            into $\Cll(X)$?
        \end{quote}

	We will give an affirmative answer to this problem in Section~3. This is the first example of a lattice which embeds into $\Cll(X)$ but not into any algebraic lattice with countably many compacts (for the latter statement, see the proof of Corollary~\ref{cor:notIntoAlg}).

It is much easier to see the following:

        \begin{prop}
            There exist a join-preserving embedding as well as a meet-preserving embedding of
            $M_{2^\omega}$ into $\Cll(X)$.
        \end{prop}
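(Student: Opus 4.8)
The plan is to produce the two embeddings separately, in both cases sending the bottom $0$ of $M_{2^\omega}$ to the clone $\J$ of projections, the top $1$ to the clone $\O$ of all operations, and the continuum-sized antichain to a family $(\C_i)$ of pairwise incomparable local clones with $\J\subsetneq\C_i\subsetneq\O$. For such a family the map is automatically an order embedding; it preserves meets iff $\C_i\cap\C_j=\J$ for all $i\neq j$, and it preserves joins iff $\C_i\vee\C_j=\O$ for all $i\neq j$. So the whole problem reduces to exhibiting, in the meet case, continuum many clones meeting pairwise in the projections, and in the join case, continuum many clones joining pairwise to $\O$.

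For the meet-preserving embedding I would fix a partition of $X$ into pairs $\{p_n,q_n\}$ ($n<\omega$) and, for each nonempty $T\subseteq\omega$, let $\tau_T$ be the involution swapping $p_n$ and $q_n$ for $n\in T$ and fixing the rest. Since $\tau_T$ is a unary bijection with $\tau_T^2=\id$, the local clone $\cll{\tau_T}$ it generates consists of exactly the projections together with the ``twisted projections'' $\tau_T\circ\pi^n_k$; in particular each arity-$n$ fragment is finite, hence closed, so $\cll{\tau_T}$ really is local. For $T\neq T'$ one has $\tau_T\neq\tau_{T'}$, and a twisted projection by $\tau_T$ equals neither a projection nor a twisted projection by $\tau_{T'}$, so $\cll{\tau_T}\cap\cll{\tau_{T'}}=\J$. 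As there are $2^{\aleph_0}$ nonempty subsets $T$, these clones form the required antichain and give the meet-preserving embedding.

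For the join-preserving embedding I would work on the relational side via the Galois connection. Fix an independent family $(S_r)_{r\in 2^\omega}$ of subsets of $X$, so that for $r\neq s$ all four Boolean regions $S_r\cap S_s$, $S_r\setminus S_s$, $S_s\setminus S_r$, $X\setminus(S_r\cup S_s)$ are infinite; such a family of size continuum exists on a countable set. Put $\C_r:=\ppol(S_r)$. These are pairwise incomparable local clones strictly between $\J$ and $\O$ (for $S_r\neq S_s$ one easily builds a unary map preserving $S_r$ but not $S_s$, and a constant map into $S_r$ witnesses $\C_r\neq\J$). By the description $\cll{\F}=\ppol\inv(\F)$ one has $\C_r\vee\C_s=\ppol\big(\inv(\C_r)\cap\inv(\C_s)\big)$, so it suffices to show that the only relations invariant under both $\ppol(S_r)$ and $\ppol(S_s)$ are the trivial (diagonal) ones, i.e. $\inv(\C_r)\cap\inv(\C_s)=\inv(\O)$; then $\C_r\vee\C_s=\ppol\,\inv(\O)=\O$.

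The core of the argument, and the step I expect to be the main obstacle, is this last claim. Let $\theta\subseteq X^m$ be invariant under both clones. First, $\ppol(S_r)$ and $\ppol(S_s)$ contain the setwise stabilizers $\mathrm{Sym}(S_r)\times\mathrm{Sym}(X\setminus S_r)$ and $\mathrm{Sym}(S_s)\times\mathrm{Sym}(X\setminus S_s)$; because the four regions are nonempty, these two groups generate all finitary permutations of $X$ (any transposition is obtained by conjugating a transposition living in one block by one living in the other), so $\theta$ is $\mathrm{Sym}(X)$-invariant and hence depends only on the equality type of its tuples. Second, both clones contain unary maps fixing $S_r$ (resp. $S_s$) pointwise that merge two chosen values, and more generally $n$-ary operations acting freely on the infinite complement $X\setminus S_r$; applying these shows that the set of equality types occurring in $\theta$ is closed under coarsening and under finite intersection of types. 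Since the partition lattice on $m$ coordinates is finite, a nonempty proper such set is a principal filter, i.e. $\theta$ is a single diagonal relation. This proves $\inv(\C_r)\cap\inv(\C_s)=\inv(\O)$, whence $\C_r\vee\C_s=\O$ and the family $(\C_r)$ yields the join-preserving embedding.
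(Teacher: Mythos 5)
Your proposal is correct, and its overall shape matches the paper's: two separate maps, bottom sent to a small common clone, top to $\O$, and the antichain to continuum many local clones whose pairwise meets (resp.\ joins) are forced to be the bottom (resp.\ top). The difference is in how the two key facts are established. For the join-preserving embedding you use the same kind of clones as the paper, namely $\ppol$ of a single subset of $X$; but where the paper enumerates \emph{all} nonempty proper subsets $A_i$ and simply quotes the Rosenberg--Szab\'o result that $\ppol(\{A\})$ is covered by $\O$ (so two distinct such clones join to $\O$ in one line), you restrict to an independent family and prove the join fact from scratch through the Galois connection: the two setwise stabilizers generate the finitary symmetric group (your conjugation trick needs all four Boolean regions nonempty, which independence provides), so common invariant relations are unions of equality types; closing the type set under coarsening and common refinement then shows it is a principal filter, i.e.\ the relation is diagonal. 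This argument is sound --- the only implicit detail is that your binary ``free'' operations must also map $S_r^2$ into $S_r$ to be polymorphisms, which is easily arranged --- and it buys self-containedness at the price of length, whereas the paper's citation buys brevity and works for arbitrary pairs of distinct subsets. For the meet-preserving embedding, both you and the paper exploit the same idea that a clone generated by a single unary map is tiny and explicitly describable, so pairwise intersections can be computed by hand; the paper uses two-valued ``indicator'' functions $f_B$ (sending $B$ to $a$ and the rest to $b$), with the bottom of $M_{2^\omega}$ going to $\cll{\{c_b\}}$, while your involutions $\tau_T$ give the slightly cleaner normalization in which the bottom is the projection clone itself.
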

        \begin{proof}
            Denote by $0$ and $1$ the smallest and the largest element of $M_{2^\omega}$, respectively, and enumerate the elements of its antichain by
            $(a_i)_{i\in {2^{\omega}}}$.\\ We first construct a join-preserving embedding.
            Enumerate the non-empty proper subsets of $X$ by $(A_i)_{i\in 2^{\omega}}$. Consider the mapping $\sigma$ which sends
            $0$ to the clone of projections, $1$ to $\O$, and every
            $a_i$ to $\ppol(\{A_i\})$. Now it is well-known (see \cite{RS84-LocalCompletenessI}) that for any non-empty proper subset $A$ of $X$,
            $\ppol(\{A\})$ is
            covered by $\O$, i.e.\ there exist no local (in fact even no global) clones between $\ppol(\{A\})$ and $\O$. Hence, we have that
            $\sigma(a_i)\vee\sigma(a_j)=\cll{\ppol(\{A_i\})\cup\ppol(\{A_j\})}=\O=\sigma(1)$ for all $i\neq j$. Since clearly $\sigma(a_i)$ contains $\sigma(0)$ for all $i\in 2^{\omega}$, the mapping $\sigma$
            indeed preserves joins.\\
            To construct a meet-preserving embedding, fix any distinct $a,b\in X$ and
            define for every non-empty subset $A$ of $X\setminus\{a,b\}$ an
            operation $f_A\in\Oo$ by
            $$
                f_A(x)=\begin{cases}a, &\text{if } x\in A\\b,
                &\text{otherwise.}
                \end{cases}
            $$
            Enumerate the non-empty subsets of $X\setminus\{a,b\}$ by $(B_i:i\in 2^{\omega})$.
            Denote the constant unary operation with value $b$ by $c_b$. Let the embedding $\sigma$ map $0$ to
            $\cll{\{c_b\}}$, for all $i\in 2^{\omega}$ map $a_i$ to $\cll{\{f_{B_i}\}}$, and
            let it map $1$ to $\O$. One readily checks that
            $\sigma(a_i)=\cll{\{f_{B_i}\}}$ contains only
            projections and, up to fictitious variables, the operations $f_{B_i}$ and $c_b$.
            Therefore, for $i\neq j$ we have $\sigma(a_i)\wedge
            \sigma(a_j)=\cll{\{c_b\}}=\sigma(0)$. Since clearly
            $\sigma(a_i)\subseteq \sigma(1)=\O$ for all $i\in
            2^{\omega}$, we conclude that $\sigma$ does indeed
            preserve meets.

        \end{proof}

        Simple as the preceding proposition is, it still shows us as a
        consequence that Theorem \ref{thm:algebraicIntervals} is not
        optimal.

        \begin{cor}\label{cor:notIntoAlg}
            $\Cll(X)$ is not embeddable into any algebraic lattice
            with countably many compact elements.
        \end{cor}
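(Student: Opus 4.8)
First I would pin down what ``embeddable'' must mean here: by Corollary~\ref{cor:intopowersetofomega} the lattice $\Cll(X)$ already admits an \emph{order}-embedding into the power set of $\omega$, which is itself an algebraic lattice with countably many compacts. So the content of the statement can only concern \emph{lattice} embeddings, i.e.\ injections preserving finite meets and joins, and that is what I would prove impossible. Accordingly, I would argue by contradiction: suppose $\phi\colon\Cll(X)\to L$ is a lattice embedding, where $L$ is algebraic with only countably many compact elements.

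The plan is to feed the meet-preserving embedding of $M_{2^\omega}$ supplied by the preceding proposition through $\phi$. Write $\sigma\colon M_{2^\omega}\to\Cll(X)$ for that embedding, with antichain $(a_i)_{i\in 2^\omega}$ and bottom $0$, so that $\sigma(a_i)\wedge\sigma(a_j)=\sigma(0)$ for all $i\neq j$ and $\sigma(a_i)>\sigma(0)$ for every $i$. Since $\sigma$ is an injective meet-homomorphism it is an order-embedding, and since $\phi$ preserves finite meets, the elements $z_i:=\phi(\sigma(a_i))$ and $\ell:=\phi(\sigma(0))$ satisfy, in $L$, that the $z_i$ are pairwise distinct, that $z_i>\ell$ for every $i$, and that $z_i\wedge z_j=\ell$ whenever $i\neq j$. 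In other words $L$ would contain an antichain of size $2^\omega$ whose members pairwise meet to the fixed element $\ell$ lying strictly below each of them.

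The crux is then to show that an algebraic lattice with countably many compacts admits no such configuration, and here I would use compactness directly. For each $i$, since $z_i>\ell$ and $z_i$ is the join of the compact elements below it, there is a compact $d_i\le z_i$ with $d_i\not\le\ell$. As there are only countably many compact elements but uncountably many indices, the pigeonhole principle produces an uncountable set of indices on which $d_i$ takes a single value $d$. Choosing two distinct such indices $i,j$ gives $d\le z_i$ and $d\le z_j$, whence $d\le z_i\wedge z_j=\ell$, contradicting $d\not\le\ell$.

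I expect the real obstacle to be conceptual rather than computational, namely recognising that it is the \emph{meet} structure of $M_{2^\omega}$, not its join structure, that blocks the embedding. The dual ``join'' configuration is in fact \emph{not} forbidden in this class of lattices: in the subspace lattice of a vector space of dimension $\aleph_0$ over a finite field --- an algebraic lattice with countably many compacts --- any two distinct hyperplanes already sum to the whole space, and there are $2^\omega$ hyperplanes, so an uncountable antichain with constant pairwise \emph{joins} genuinely exists there. Consequently the join-preserving embedding of the proposition is of no use for this corollary, and the argument must be routed through the meet-preserving embedding, where the single compact element $d$ forced below all the $z_i$ is exactly what yields the contradiction.
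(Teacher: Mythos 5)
Your proof is correct, and it follows the paper's overall strategy: both arguments route through the meet-preserving embedding of $M_{2^\omega}$ into $\Cll(X)$ supplied by the preceding proposition, and then show that no algebraic lattice with countably many compact elements can contain $M_{2^\omega}$ as a meet-subsemilattice. Where you genuinely differ is in the proof of that last fact. The paper invokes the representation theorem (citing Crawley--Dilworth) that every algebraic lattice with countably many compacts is the subalgebra lattice of an algebra on the base set $\omega$, so that meets become set-theoretic intersections; it then observes that an uncountable family of subsets of $\omega$ with constant pairwise intersection $D$ would yield uncountably many pairwise disjoint nonempty sets $S_i\setminus D$ inside $\omega$, which is absurd. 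You instead argue intrinsically: each $z_i>\ell$ lies above some compact $d_i\not\le\ell$ (else $z_i$, being the join of the compacts below it, would be $\le\ell$), and the pigeonhole principle on the countably many compacts forces $d_i=d_j=d$ for some $i\ne j$, whence $d\le z_i\wedge z_j=\ell$, a contradiction. Your route is more self-contained --- it uses only the definition of algebraicity, with no appeal to a representation theorem --- at the cost of a little more bookkeeping; the underlying counting idea is the same in both, transported through the representation in the paper and applied directly to compact elements in yours. Your two side remarks are also correct and add value: the observation that Corollary~\ref{cor:intopowersetofomega} already gives an \emph{order}-embedding of $\Cll(X)$ into the power set of $\omega$ (itself algebraic with countably many compacts), so only the lattice-embedding reading of the statement has content; and the hyperplane example in the subspace lattice of an $\aleph_0$-dimensional vector space over a finite field, showing that the dual join-configuration \emph{is} realizable in this class of lattices, so the argument must indeed be routed through the meet-preserving embedding rather than the join-preserving one.
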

        \begin{proof}
            It is well-known and easy to check (confer also \cite{CD73}) that any algebraic lattice $\L$ with countably many compact elements can be
            represented as the subalgebra lattice of an algebra over
            the base set
            $\omega$. The meet in the subalgebra lattice $\L$ is just
            the set-theoretical intersection. Now there is certainly
            no uncountable family of subsets of $\omega$ with the
            property that any two distinct members of this family have
            the same intersection $D$; for the union of such a family would have to be uncountable. Consequently, $\L$ cannot
            have $M_{2^\omega}$ as a meet-subsemilattice. But $\Cll(X)$ has, hence $\L$ cannot have $\Cll(X)$ as a sublattice.
        \end{proof}

        Observe that this corollary is a strengthening of Corollary
        \ref{cor:localnotinfinite}, since the clone lattice over a
        finite set is an algebraic lattice with countably many
        compact elements.

        We conclude this section by remarking that the lattice $\Cl(X)$ of all (not
        necessarily local) clones on $X$ is infinitely more
        complicated than $\Cll(X)$: It contains all algebraic
        lattices with at most $2^{\aleph_0}$ compact elements, and
        in particular all lattices of size continuum (including $\Cll(X)$), as complete
        sublattices \cite{Pin06AlgebraicSublattices}.

\section{How to embed $M_{2^{\omega}}$ into $\Cll(X)$}

	This section is devoted to the proof of the following theorem.

	\begin{thm}\label{thm:mcont}
		$M_{2^{\omega}}$ is isomorphic to a sublattice of $\Cll(X)$.
	\end{thm}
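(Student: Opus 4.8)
The plan is to build the antichain explicitly as local clones generated by single operations, using the Galois connection between operations and relations to control joins and a combinatorial coding to control meets. Write the elements of $M_{2^\omega}$ as a least element $0$, a greatest element $1$, and an antichain $(a_i)_{i\in 2^\omega}$. I would fix the top clone $\O$ and a fixed small bottom clone $B$ (the projections, or a fixed finitely generated core clone), and construct operations $g_i$, $i\in 2^\omega$, so that with $\C_i:=\cll{B\cup\{g_i\}}$ the assignment $0\mapsto B$, $1\mapsto\O$, $a_i\mapsto\C_i$ is a lattice embedding. Since meet and join in $\Cll(X)$ are intersection and the local clone generated by the union, this amounts to establishing, for all $i\neq j$, the three requirements $B\subsetneq\C_i\subsetneq\O$, $\C_i\cap\C_j=B$, and $\cll{\C_i\cup\C_j}=\O$.

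Joins and properness I would handle through the identity $\cll{\F}=\ppol\Inv(\F)$ stated above. I would arrange that every $g_i$ preserves some nontrivial relation $\rho_i$ (an infinite–coinfinite unary predicate, or a binary relation coding $i$), which immediately gives $\C_i\subseteq\ppol(\{\rho_i\})\subsetneq\O$; properness $B\subsetneq\C_i$ is clear since $g_i\notin B$ by design. For the join I would design the $g_i$ so that any two distinct generators have no common nontrivial invariant, i.e.\ $\Inv(\C_i)\cap\Inv(\C_j)$ consists only of the trivial relations preserved by every operation. Using $\Inv(\C_i\cup\C_j)=\Inv(\C_i)\cap\Inv(\C_j)$ and $\cll{\C_i\cup\C_j}=\ppol\Inv(\C_i\cup\C_j)$, this yields $\C_i\vee\C_j=\O$. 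Concretely I expect each $g_i$ to be a binary operation behaving like a fixed core operation everywhere except on inputs that witness a code for $i$, so that two different codes jointly violate every nontrivial relation, e.g.\ by letting one locally simulate a \emph{switching} operation that no proper clone preserves.

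For the meets, the key device is an almost disjoint family $\{S_i\}_{i\in 2^\omega}$ of infinite subsets of a countable index set, with $S_i\cap S_j$ finite for $i\neq j$; the code carried by $g_i$ is \emph{written on} $S_i$. The point is that on any finite set of inputs the special behaviour of $g_i$ overlaps that of $g_j$ in only finitely many places, so the two codes cannot be confused by finite data. I would then prove $\C_i\cap\C_j=B$ through the interpolation property defining local clones: an operation lying in both $\C_i$ and $\C_j$ must, on every finite set, be reproducible both by a composition from $g_i$ and by a composition from $g_j$; showing that such an operation is forced to be code-free, hence to lie in $B$, is the heart of the matter.

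The main obstacle is exactly this meet computation, and it is where the theorem pays for the fact, recorded in Corollary~\ref{cor:notIntoAlg}, that the meet-semilattice of $M_{2^\omega}$ cannot live in any algebraic lattice with countably many compacts. What makes it delicate is local closure: $\C_i$ is closed under limits of finite approximations, so I must rule out that the $i$-code could be \emph{forged} in the limit using only $g_j$-data. The design therefore has to meet two opposing demands at once — the code must be locally invisible to a single foreign generator (so pairwise meets collapse to $B$) yet jointly decodable by any two distinct generators (so pairwise joins fill up to $\O$) — and it is the almost disjointness of the $S_i$ together with a sufficiently rigid encoding that I would use to reconcile them. I would also keep in reserve the option of first realizing the family inside the lattice $\Monl(X)$ of local transformation monoids, which embeds into $\Cll(X)$ preserving meets and joins; this simplifies the meet analysis but makes the full join harder to achieve, so I expect the binary construction above to be the more flexible route.
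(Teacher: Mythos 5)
Your proposal is a strategy, not a proof: the two steps on which everything rests --- the meet collapse $\C_i\cap\C_j=B$ and the join fill $\cll{\C_i\cup\C_j}=\O$ --- are stated as goals and explicitly deferred (you yourself call the meet computation ``the heart of the matter''), with no construction of the $g_i$ and no argument that the two demands can be met \emph{simultaneously}. That simultaneity is precisely the content of the theorem: as the paper's proposition preceding Corollary~\ref{cor:notIntoAlg} shows, each demand is easy to satisfy separately (the coatoms $\ppol(\{A_i\})$ give $2^\omega$ many clones with pairwise joins $\O$; the clones $\cll{\{f_{B_i}\}}$ give pairwise meets collapsing to a common bottom), and the whole difficulty is combining them. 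Your conditional reasoning for the join (if $\Inv(\C_i)\cap\Inv(\C_j)$ is trivial then $\C_i\vee\C_j=\O$) is sound but never discharged, and nothing in the proposal confronts the tension between making two generators jointly rich enough to locally produce \emph{every} operation on $X$ and keeping their common consequences down at the projections.

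Moreover, your design constraints are strictly stronger than what the theorem requires, which is why you are stuck. An embedding of $M_{2^\omega}$ only needs the pairwise joins to agree with one another and the pairwise meets to agree with one another; neither needs to be $\O$ or the projection clone. The paper exploits exactly this freedom: its top is $\C=\cll{\bigcup_\alpha\C_\alpha}$, not $\O$, and its bottom is the infinite ``decoder'' clone $\cll{\M}$, not the projections. The decoders $m^\sigma$ sitting in the common bottom are what make joins work without generating $\O$: for a third index $\gamma$, any $h\in\GG_\gamma$ is interpolated on a finite $F\subseteq A$ as $m^\sigma(f(x),g(x))$ with $f\in\GG_\alpha$, $g\in\GG_\beta$ mapping $F$ onto sets $C\perp D$ and $\sigma:C\mult D\to B_\gamma$ chosen so that $\sigma(f(x),g(x))=h(x)$; hence all pairwise joins equal the common join $\C$ (Lemma~\ref{lem:join}). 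Dually, the meet is controlled not by showing intersections are ``code-free'' but by Lemma~\ref{lem:identifyingVariables}: any term using a code function collapses, upon identifying all variables, to a member of $\GG_\alpha$, which maps a large finite subset of $A$ injectively into $B_\alpha$; membership in $\C_\beta$ would force an injection into the finite set $B_\alpha\cap B_\beta$, contradicting almost disjointness (Lemma~\ref{lem:meet}). Your plan contains no analogue of either mechanism; until you exhibit concrete $g_i$ and prove both statements --- or, better, relax the top and bottom as the paper does --- there is no proof here.
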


	Write
    $X=A\dot{\cup}B\dot{\cup}\{\infty\}$, with both $A$ and $B$ infinite.
    Without loss of generality assume $B=2^{<\omega}$, the set of all finite $0$-$1$-sequences. For every infinite $0$-$1$-sequence $\alpha\in 2^\omega$, write
    $B_\alpha:=\{\alpha\rest_n:n<\omega\}$. Clearly, the $B_\alpha$ form an almost
    disjoint family of infinite subsets of $B$, that is, $B_\alpha\cap B_\beta$ is finite whenever $\alpha,\beta\in 2^\omega$ are distinct.
    For all $\alpha\in 2^\omega$, set
    $\GG_\alpha\subseteq\Oo$ to consist of all functions $f\in\Oo$
    satisfying the following two properties:
    \begin{itemize}
        \item $f$ maps $\{\infty\}\cup B$ to $\infty$, and
        \item $f$ maps $A$ injectively into $B_\alpha$.
    \end{itemize}

     For $c,d\in B$, we write $c\perp d$ iff none of the two elements is an initial segment of the
     other, i.e.\ iff there is no $\alpha$ such that $c,d\in B_\alpha$. If $C,D\subseteq B$, we write $C\perp D$ iff $c\perp d$
     for all $(c,d)\in C\mult D$. For a function $\sigma$, write $\dom(\sigma)$ and $\ran(\sigma)$ for the domain and the range of $\sigma$, respectively. Now for all functions $\sigma$
     with the properties
     \begin{itemize}
        \item $\dom(\sigma)=C\mult D$ for some finite $C,D\subseteq B$ with $C\perp D$,
        and
        \item $\ran(\sigma)\subseteq B$, and
        \item $\sigma$ is injective,
     \end{itemize}
     define an operation $m^\sigma\in \O^{(2)}$ by
    $$
        m^\sigma(x,y):=
            \begin{cases}
                                    \infty&,\, x=\infty\vee y=\infty\\
                                    y&,\, x\in A\wedge y\in B\\
                                    \sigma(x,y)&,\, (x,y)\in
                                    \dom(\sigma)\\
                                    x&,\text{otherwise.}\\
            \end{cases}
    $$

    In words, $m^\sigma$ does the following: If one of its
    arguments equals $\infty$, then it returns $\infty$. If one of the
    arguments is in $A$ and the other one in $B$, then it returns the one in
    $B$. If both arguments are in $B$ and $\sigma$ is defined for $(x,y)$, then $m^\sigma(x,y)=\sigma(x,y)$. Otherwise, $m^\sigma$
    returns the first argument.

    Denote by $\M$ the set of all such operations
    $m^\sigma$, together with the constant function with value $\infty$, which we denote by $\infty$ as well.
    For all $\alpha\in 2^\omega$, set $\D_\alpha:=\cl{\M\cup\GG_\alpha}$, which is to denote the normal clone closure of $\M\cup\GG_\alpha$, i.e., the set of all operations which can be written as a term of the operations in $\M\cup\GG_\alpha$ and the projections. Set moreover $\C_\alpha:=\cll{\M\cup\GG_\alpha}$ (the topological closure of $\D_\alpha$), for all
    $\alpha\in 2^\omega$. Finally, set $\C:=\cll{\bigcup_{\alpha\in 2^\omega}\C_\alpha}$.

We now aim at proving $\cll{\C_\alpha\cup\C_\beta}=\C$ and $\C_\alpha\cap\C_\beta=\cll{\M}$, for all distinct $\alpha,\beta\in2^\omega$, which clearly implies our theorem.

    \begin{lem}\label{lem:composition}
        Let $m\in\M$, and $f,g \in\GG_\alpha$ for some $\alpha\in 2^\omega$. Then the
        following equations hold:
            $$m(x,x)=x\quad \text{and}\quad m(f(x),x)=m(x,f(x))=m(f(x),g(x))=f(x).$$
    \end{lem}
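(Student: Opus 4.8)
The plan is to verify the four identities for $m=m^\sigma$ by a single case distinction according to where the variable $x$ lies, namely in $\{\infty\}$, in $B$, or in $A$. The only tools needed are the defining clauses of $m^\sigma$ together with the range conditions defining $\GG_\alpha$: every $f\in\GG_\alpha$ sends $\{\infty\}\cup B$ to $\infty$ and sends $A$ injectively into $B_\alpha\subseteq B$. Since the constant operation $\infty$ contributes nothing to these binary identities, the content is the case $m=m^\sigma$, which I treat below; the same analysis applies verbatim with $g$ in place of the second slot.

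First I would dispose of the easy region $x\in\{\infty\}\cup B$. There $f(x)=g(x)=\infty$, so each of $m^\sigma(f(x),x)$, $m^\sigma(x,f(x))$ and $m^\sigma(f(x),g(x))$ has $\infty$ in an argument and hence equals $\infty=f(x)$ by the first clause of $m^\sigma$. For the remaining identity $m^\sigma(x,x)=x$: if $x=\infty$ the first clause gives $\infty=x$, while if $x\in B$ the pair $(x,x)$ cannot lie in $\dom(\sigma)$, because $x$ is an initial segment of itself and so $x\perp x$ fails, whereas every pair in $\dom(\sigma)=C\times D$ satisfies $C\perp D$; we therefore land in the ``otherwise'' clause and get $x$.

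The substantive region is $x\in A$, where $f(x),g(x)\in B_\alpha\subseteq B$. Here $m^\sigma(x,x)=x$ because $(x,x)\notin\dom(\sigma)\subseteq B\times B$ (``otherwise'' clause, as $x\notin B$); the identity $m^\sigma(x,f(x))=f(x)$ is immediate from the second clause, since $x\in A$ and $f(x)\in B$; and $m^\sigma(f(x),x)=f(x)$ follows because $(f(x),x)$ fits neither the first clause, nor the second ($f(x)\in B$, not $A$), nor $\dom(\sigma)$ (since $x\in A$ is not in $B$), so the ``otherwise'' clause returns the first argument $f(x)$. The one identity requiring a genuine argument is $m^\sigma(f(x),g(x))=f(x)$, where both arguments lie in $B$ and one must rule out the $\sigma$-clause.

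This last point is the crux, and it is exactly where the almost-disjoint-family construction is used. Since $f(x)$ and $g(x)$ both lie in $B_\alpha=\{\alpha\rest_n:n<\omega\}$, they are both initial segments of the single sequence $\alpha$, so one of them is an initial segment of the other; hence $f(x)\perp g(x)$ \emph{fails}. On the other hand every pair in $\dom(\sigma)=C\times D$ satisfies $C\perp D$, i.e.\ neither coordinate is an initial segment of the other. Therefore $(f(x),g(x))\notin\dom(\sigma)$, the ``otherwise'' clause applies, and $m^\sigma(f(x),g(x))$ returns its first argument $f(x)$, as required. I expect no further obstacle: once this observation that $B_\alpha$ is a chain under the initial-segment order (so that no two of its members are $\perp$-related) is isolated, every clause of $m^\sigma$ is forced and the remaining verification is purely mechanical.
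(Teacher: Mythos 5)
Your proof is correct and is precisely the ``straightforward verification'' that the paper leaves to the reader: a direct case analysis on whether $x$ lies in $\{\infty\}\cup B$ or in $A$, using the defining clauses of $m^\sigma$ and $\GG_\alpha$. You also correctly isolate the one non-mechanical point — that any two elements of $B_\alpha$ are comparable as initial segments of $\alpha$, so no pair from $B_\alpha\times B_\alpha$ can lie in $\dom(\sigma)=C\times D$ with $C\perp D$ — which is exactly why the $\sigma$-clause never fires on $(f(x),g(x))$.
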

    \begin{proof}
        We leave the straightforward verification to the reader.
    \end{proof}
	
Denote the identity operation on $X$ by $\id$.

    \begin{lem}
        Let $f\in\C_\alpha\uo$. Then $f\in\GG_\alpha$, or $f=\infty$, or $f=\id$.
    \end{lem}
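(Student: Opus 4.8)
The plan is to reduce the statement to a closure computation in two stages: first determine the unary operations lying in the purely algebraic clone $\D_\alpha=\cl{\M\cup\GG_\alpha}$, and only then take the topological (local) closure. Since the topological closure of a clone is again a clone, local closure acts arity by arity, so that $\C_\alpha\cap\Oo$ equals the closure of $\D_\alpha\cap\Oo$ taken inside $\Oo$. It therefore suffices to prove (i) that $\D_\alpha\cap\Oo\subseteq U$, where I write $U:=\GG_\alpha\cup\{\infty,\id\}$, and (ii) that $U$ is topologically closed in $\Oo$.

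For (i) I would argue by structural induction on a unary term $t(x)$ over the generators $\M\cup\GG_\alpha$ and the projections. The base case $t=x$ gives $\id$, and the constant generator gives $\infty$. In the inductive step the outermost symbol is either a unary $g\in\GG_\alpha$ applied to one subterm, or a binary $m^\sigma$ applied to two subterms, each of which lies in $U$ by the inductive hypothesis. Composing $g$ with an element of $U$ yields $g$ when the inner term is $\id$, and $\infty$ in the remaining cases, since $g$ collapses $B\cup\{\infty\}$ to $\infty$. The binary case is the computational heart: for each combination of argument types from $U$ one checks that $m^\sigma(t_1,t_2)$ collapses to $\id$, to $\infty$, or to one of $t_1,t_2$, hence stays in $U$. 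Lemma~\ref{lem:composition} already disposes of the mixed cases, and the one genuinely new case is $m^\sigma(g(x),g'(x))$ with $g,g'\in\GG_\alpha$: here the point is that $g(x)$ and $g'(x)$ both lie on the branch $B_\alpha$, hence are comparable and so are never $\perp$-incomparable, which means the pair never lands in $\dom(\sigma)$; thus the ``otherwise'' clause of $m^\sigma$ fires and returns $g(x)$, i.e.\ an element of $\GG_\alpha$.

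For (ii) the key observation is that each of the three types in $U$ is recognizable from its value at a single point $a_0\in A$: the map $\id$ sends $a_0$ into $A$, the constant $\infty$ sends it to $\infty$, and every element of $\GG_\alpha$ sends it into $B$, and these three target regions are pairwise disjoint. Thus, given $f$ in the closure and any finite $S\ni a_0$, the element of $U$ witnessing agreement with $f$ on $S$ is forced by the value $f(a_0)$ to be of one fixed type; so $f$ agrees on arbitrarily large finite sets with members of a single type. If that type is $\id$ or $\infty$ we conclude at once that $f=\id$ or $f=\infty$; if it is $\GG_\alpha$, then $f$ lies in the closure of $\GG_\alpha$, and a short check shows $\GG_\alpha$ is already closed — the conditions ``$f$ maps $\{\infty\}\cup B$ to $\infty$'' and ``$f$ maps $A$ into $B_\alpha$'' are manifestly closed, while injectivity survives because agreement on every two-element subset of $A$ with an injective function forces $f$ to be injective. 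I expect this last point — that injectivity is preserved under local closure even though it is not a closed condition in general — together with the single-point type-separation argument, to be the main obstacle; once these are secured, everything else is routine case-checking.
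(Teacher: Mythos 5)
Your proposal is correct and follows essentially the same route as the paper: first compute the unary part of the algebraic closure $\D_\alpha\uo$ by induction over terms (using Lemma~\ref{lem:composition} for the $m^\sigma$ cases), then observe that the resulting set $\GG_\alpha\cup\{\infty,\id\}$ is topologically closed, so that $\C_\alpha\uo=\D_\alpha\uo$. The only difference is one of detail: the paper dismisses the closedness check as ``easy to check,'' whereas you supply it explicitly (the type separation via the value at a point of $A$, and the two-point argument showing injectivity survives local closure), both of which are correct.
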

    \begin{proof}
        We first prove the statement for all $f\in\D_\alpha\uo$, using
        induction over terms. The beginning is trivial, so let
        $f=g(t)$, with $g\in \GG_\alpha\cup\{\id\}$ and $t\in\D_\alpha\uo$ satisfying
        the induction hypothesis. There is nothing to show if $t$ is the identity or $\infty$.
        If $t\in\GG_\alpha$, then also $f\in\GG_\alpha$ if $g$ is the identity; otherwise, $g\in\GG_\alpha$ and $f=\infty$.
        Now assume $f=m(s,t)$, with $m\in\M$ and $s,t$
        satisfying the induction hypothesis. Then the preceding lemma immediately implies our assertion.

        Now with this description of $\D_\alpha\uo$ it is easy to check that $\D_\alpha\uo$ is locally closed. Hence $\C_\alpha\uo=\D_\alpha\uo$,
        proving the lemma.
    \end{proof}

    \begin{lem}\label{lem:identifyingVariables}
        Let $t(x_1,\ldots,x_n)\in\D_\alpha$, and assume that $t$ has a
        representation as a term over $\GG_\alpha\cup\M$ (without projections!) which uses at
        least one symbol from $\GG_\alpha$. Then either $t=\infty$, or $t(x,\ldots,x)\in\GG_\alpha$.
    \end{lem}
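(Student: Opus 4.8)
The plan is to induct on the structure of a term representing $t$. Since projections are forbidden, every internal node of the term tree carries either a (unary) symbol from $\GG_\alpha$ or a (binary) symbol $m^\sigma\in\M$, and every leaf is a variable or the nullary constant $\infty$. Along the way I will also need to understand terms built purely from $\M$, so I would first record the following sub-claim: \emph{a term $s$ over $\M$ alone is either the constant function $\infty$, or satisfies $s(x,\ldots,x)=\id$.} This follows by a quick induction: a variable leaf gives the identity on the diagonal and a constant leaf gives $\infty$; and for $s=m^\sigma(s_1,s_2)$, either some $s_i$ is already constant $\infty$ (whence that argument slot is always $\infty$ and so is $s$), or both $s_i$ have identity diagonal and $s(x,\ldots,x)=m^\sigma(x,x)=x$ by Lemma~\ref{lem:composition}.

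For the lemma itself, the naive induction only controls the \emph{diagonal} $t(x,\ldots,x)$, whereas the statement asks us to conclude that $t$ is the constant operation $\infty$ (not merely that its diagonal is). To bridge this gap I would strengthen the induction hypothesis with a range invariant: \emph{every subterm $s$ which uses at least one symbol from $\GG_\alpha$ satisfies $\ran(s)\subseteq B\cup\{\infty\}$}, i.e.\ $s$ never takes a value in $A$. The engine behind this invariant is the branch analysis of $m^\sigma$: inspecting its four cases shows that $m^\sigma(a_1,a_2)\in A$ can occur only when both $a_1,a_2\in A$. Hence if one argument of an $m^\sigma$-node is fed by a $\GG_\alpha$-using subterm, that argument avoids $A$ by induction, and so does the output; and a symbol from $\GG_\alpha$ maps into $B_\alpha\cup\{\infty\}\subseteq B\cup\{\infty\}$ outright. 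This gives the invariant for both kinds of outermost symbol.

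With the sub-claim and the invariant available, I would carry out the main induction by cases on the outermost symbol of $t$ (which uses $\GG_\alpha$ somewhere). If $t=f(s)$ with $f\in\GG_\alpha$: when $s$ uses $\GG_\alpha$ the invariant gives $\ran(s)\subseteq B\cup\{\infty\}$, and since $f$ sends all of $B\cup\{\infty\}$ to $\infty$, the operation $t$ is constant $\infty$; when $s$ is a pure-$\M$ term the sub-claim yields either $s=\infty$, so $t=f(\infty)=\infty$, or $s(x,\ldots,x)=\id$, so $t(x,\ldots,x)=f\in\GG_\alpha$. If $t=m^\sigma(s_1,s_2)$, then some $s_i$ uses $\GG_\alpha$; should some $s_i$ be constant $\infty$, that slot forces $t$ to be constant $\infty$, and otherwise each diagonal $s_i(x,\ldots,x)$ is the identity (pure-$\M$ slot) or a member of $\GG_\alpha$ (a $\GG_\alpha$-using slot), with at least one lying in $\GG_\alpha$. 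In each of these remaining configurations the identities $m(f(x),x)=m(x,f(x))=m(f(x),g(x))=f(x)$ of Lemma~\ref{lem:composition} collapse $t(x,\ldots,x)$ to a member of $\GG_\alpha$.

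I expect the main obstacle to be precisely this upgrade from a diagonal statement to a statement about the full operation: a term whose diagonal is constant $\infty$ need not, a priori, be constant $\infty$, and ruling this out is exactly what the auxiliary range invariant (together with the observation that $m^\sigma$ outputs an element of $A$ only on inputs from $A\times A$) is designed to do. Everything else is a routine bookkeeping of cases resting on Lemma~\ref{lem:composition}.
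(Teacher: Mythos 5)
Your proof is correct and follows essentially the same route as the paper's: an induction over term complexity resting on exactly the paper's two pillars, namely that pure-$\M$ terms act as the identity on the diagonal (or are constantly $\infty$), and that any $\GG_\alpha$-using subterm has range contained in $B\cup\{\infty\}$, so that an outer symbol from $\GG_\alpha$ then yields the constant $\infty$. Your write-up is in fact somewhat more explicit than the paper's, which leaves the $m^\sigma$-case and the handling of constant-$\infty$ subterms to the reader.
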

    \begin{proof}
        We use induction over the
        complexity of $t$. The beginning is trivial, so write
        $t=g(s)$, where $g\in\GG_\alpha$ and $s$ satisfies the induction
        hypothesis. Assume first that $s$ does not contain any symbol
        from $\GG_\alpha$; then $s(x,\ldots,x)=x$ by Lemma
        \ref{lem:composition} and a standard induction, implying
        $t(x,\ldots,x)=g(s(x,\ldots,x))=g(x)\in\GG_\alpha$. If on the
        other hand $s$ does contain a symbol from $\GG_\alpha$, then
        the range of $s$ is contained in $B\cup\{\infty\}$, as is
        easily verified by a straightforward induction using Lemma~\ref{lem:composition}. Hence,
        $g(s)=\infty$. Now write
        $t=m(r,s)$, with $m\in\M$ and $r,s$ satisfying the
        induction hypothesis. Then our assertion follows from Lemma \ref{lem:composition} and the definition of the operations in $\M$.
    \end{proof}

    \begin{lem}\label{lem:meet}
        Let $\alpha,\beta\in 2^\omega$ be distinct. Then $\C_\alpha\cap\C_\beta=\cll{\M}$.
    \end{lem}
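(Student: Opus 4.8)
The plan is to prove the nontrivial inclusion $\C_\alpha\cap\C_\beta\subseteq\cll{\M}$; the reverse inclusion is immediate from monotonicity of the local closure, since $\M\subseteq\M\cup\GG_\gamma$ for $\gamma\in\{\alpha,\beta\}$.

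First I would extract the one piece of \emph{global} information about an operation $f\in\C_\alpha\cap\C_\beta$ that I need, namely the behaviour of its diagonal. Writing $f^\Delta(x):=f(x,\ldots,x)$, this unary operation lies in $\C_\alpha\uo\cap\C_\beta\uo$, because each $\C_\gamma$ is a clone and hence closed under identifying variables. By the lemma characterizing the unary fragments, $\C_\alpha\uo\subseteq\GG_\alpha\cup\{\id,\infty\}$ and $\C_\beta\uo\subseteq\GG_\beta\cup\{\id,\infty\}$. Now every element of $\GG_\alpha$ maps $A$ injectively into $B_\alpha$ and every element of $\GG_\beta$ maps $A$ injectively into $B_\beta$, while $B_\alpha\cap B_\beta$ is finite by almost disjointness; thus no operation can map the infinite set $A$ injectively into both, so $\GG_\alpha\cap\GG_\beta=\emptyset$ and therefore $f^\Delta\in\{\id,\infty\}$. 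In particular, for every $s\in A$ we have $f(s,\ldots,s)\in\{s,\infty\}$, which lies outside $B$.

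To place $f$ in $\cll{\M}$ I would verify the interpolation property: for an arbitrary finite $S\subseteq X^n$ I must exhibit an element of the clone $\cl{\M}$ agreeing with $f$ on $S$. Fix any $s_0\in A$ and enlarge $S$ to $S':=S\cup\{(s_0,\ldots,s_0)\}$. Since $f\in\C_\alpha=\cll{\M\cup\GG_\alpha}$, there is a term operation $p\in\D_\alpha$ written over $\M\cup\GG_\alpha$ with $p\rest_{S'}=f\rest_{S'}$. If this representation uses no symbol from $\GG_\alpha$, then $p\in\cl{\M}$ and we are done for this $S$. Otherwise it uses at least one symbol from $\GG_\alpha$, and Lemma~\ref{lem:identifyingVariables} applies: either $p=\infty$, in which case $f\rest_{S'}$ is constant $\infty$ and the constant operation $\infty\in\M$ interpolates $f$ on $S$, or $p(x,\ldots,x)\in\GG_\alpha$. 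The latter alternative is impossible, since it would force $p(s_0,\ldots,s_0)\in B_\alpha\subseteq B$, whereas $(s_0,\ldots,s_0)\in S'$ gives $p(s_0,\ldots,s_0)=f(s_0,\ldots,s_0)=f^\Delta(s_0)\in\{s_0,\infty\}$, which avoids $B$. Hence in every case some element of $\cl{\M}$ agrees with $f$ on $S$, and as $S$ was arbitrary, $f\in\cll{\M}$.

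The main obstacle, and the key idea, is finding the right \emph{local} test that couples the global fact $f^\Delta\in\{\id,\infty\}$ to the term structure of the local approximants of $f$: inserting the diagonal point $(s_0,\ldots,s_0)$ into the interpolation set is exactly what turns Lemma~\ref{lem:identifyingVariables} into a contradiction whenever an approximating term genuinely involves $\GG_\alpha$, thereby forcing every approximant to be, apart from the trivial possibility $\infty$, a pure $\M$-term. I would stress that the second sequence $\beta$ enters at only one point, namely to rule out $f^\Delta\in\GG_\alpha$; it is precisely the almost disjointness of the family $(B_\gamma)_{\gamma\in 2^\omega}$ that makes $\GG_\alpha\cap\GG_\beta$ empty and so pins the diagonal down to $\{\id,\infty\}$.
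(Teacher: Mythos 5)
Your proof is correct, but it takes a genuinely different route from the paper's. The paper argues by contradiction with a counting step: assuming $t\notin\cll{\M}$, it fixes a finite witness set $F^n$ on which no member of $\cl{\M}$ agrees with $t$, enlarges $F$ so that $|F\cap A|>|B_\alpha\cap B_\beta|$, and applies Lemma~\ref{lem:identifyingVariables} to interpolating terms from \emph{both} $\D_\alpha$ and $\D_\beta$; each such term must involve a symbol from $\GG_\alpha$ (resp.\ $\GG_\beta$), so the diagonal $t(x,\ldots,x)$ maps $F\cap A$ injectively into $B_\alpha\cap B_\beta$, contradicting the choice of $F$. You instead split the work into a global step and a one-sided local step: you first pin down $f^\Delta\in\{\id,\infty\}$, using the unary-fragment lemma ($\C_\gamma\uo\subseteq\GG_\gamma\cup\{\id,\infty\}$) together with $\GG_\alpha\cap\GG_\beta=\emptyset$ --- the same almost-disjointness fact, applied once to the infinite set $A$ rather than to an enlarged finite set --- and then, for each finite $S$, you need an interpolant from $\D_\alpha$ only, the single added diagonal point $(s_0,\ldots,s_0)$ ruling out the alternative $p(x,\ldots,x)\in\GG_\alpha$ of Lemma~\ref{lem:identifyingVariables}. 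Your version is more modular: it puts the unary-fragment lemma to genuine use (the paper proves that lemma but never invokes it in its proofs of Lemmas~\ref{lem:meet} and~\ref{lem:join}), and it makes the role of $\beta$ completely transparent, whereas the paper's symmetric counting argument stays entirely within the interpolation framework and needs no information about unary fragments. Two harmless points to tighten: Lemma~\ref{lem:identifyingVariables} presupposes a projection-free term representation, so strictly one should observe that after eliminating projection symbols either some $\GG_\alpha$-symbol survives or $p\in\cl{\M}$; your guard case covers this, exactly as the paper does implicitly. And in your case $p=\infty$ the interpolant should be the $n$-ary constant $\infty$, which lies in $\cl{\M}$ as the unary $\infty\in\M$ composed with a projection.
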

    \begin{proof}
        It suffices to show $\C_\alpha\cap\C_\beta\subseteq\cll{\M}$. To see
        this, let $t(x_1,\ldots,x_n)\in \C_\alpha\cap\C_\beta$, and
        suppose there is a finite set $F\subseteq X$ such that no
        operation in $\cl{\M}$ (normal clone closure) agrees with $t$ on $F^n$. By
        expanding $F$, we may assume that $|F\cap A|>|B_\alpha\cap
        B_\beta|$ (since the latter set is finite).
        Let $s$ be
        a term over $\GG_\alpha\cup\M$ which interpolates $t$ on
        $F^n$. By our assumption, there appears some function from
        $\GG_\alpha$ in $s$. Then by Lemma
        \ref{lem:identifyingVariables}, $s(x,\ldots,x)\in\GG_\alpha$.
        Therefore, also $t(x,\ldots,x)$ behaves like an operation
        from $\GG_\alpha$ on $F$, and in particular on $F\cap A$.
        Hence, it maps $F\cap A$ injectively into $B_\alpha$. Now
        the same argument shows us that $t(x,\ldots,x)$ maps $F\cap
        A$ injectively into $B_\beta$, hence it maps $F\cap A$
        injectively into $B_\alpha\cap B_\beta$, in contradiction with the size of these sets.
    \end{proof}

    \begin{lem}\label{lem:join}
        Let $\alpha,\beta\in 2^\omega$ be distinct. Then $\cll{\C_\alpha\cup\C_\beta}=\C$.
    \end{lem}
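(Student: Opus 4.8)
The plan is to prove the nontrivial inclusion $\C\subseteq\cll{\C_\alpha\cup\C_\beta}$; the reverse inclusion is immediate, since $\C_\alpha$ and $\C_\beta$ are both among the clones whose union generates $\C$. Because $\cll{\C_\alpha\cup\C_\beta}$ is itself a local clone containing $\M$ (which lies in every $\C_\gamma$), and because $\C=\cll{\bigcup_{\gamma\in 2^\omega}\C_\gamma}$ with $\C_\gamma=\cll{\M\cup\GG_\gamma}$, it suffices to show that $\GG_\gamma\subseteq\cll{\C_\alpha\cup\C_\beta}$ for every $\gamma\in 2^\omega$. Once this is established, $\M\cup\GG_\gamma\subseteq\cll{\C_\alpha\cup\C_\beta}$ forces $\C_\gamma\subseteq\cll{\C_\alpha\cup\C_\beta}$, and taking the local closure of the union over all $\gamma$ yields the claim.

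So fix $\gamma$ and $g\in\GG_\gamma$; by the interpolation property it is enough, for each finite $F\subseteq X$, to produce a single term over $\C_\alpha\cup\C_\beta$ agreeing with $g$ on $F$. I would realise $g$ on $F$ by a term of the very simple shape $m^\sigma(f(x),h(x))$ with $f\in\GG_\alpha$, $h\in\GG_\beta$, and a suitable $m^\sigma\in\M$. Let $p$ be the first coordinate at which the distinct sequences $\alpha$ and $\beta$ differ. Choose $f\in\GG_\alpha$ and $h\in\GG_\beta$ whose restrictions to $F\cap A$ are injective into $B_\alpha$ and $B_\beta$ respectively and take only values of length strictly greater than $p$; this is possible since $B_\alpha$ and $B_\beta$ contain elements of every length. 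Then any $c\in f(F\cap A)\subseteq B_\alpha$ and $d\in h(F\cap A)\subseteq B_\beta$ already differ in coordinate $p$, so $c\perp d$, whence $C:=f(F\cap A)$ and $D:=h(F\cap A)$ satisfy $C\perp D$.

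Next I define $\sigma$ on $C\times D$ by setting $\sigma(f(a),h(a)):=g(a)$ for each $a\in F\cap A$ and extending $\sigma$ arbitrarily but injectively on the remaining off-diagonal pairs, mapping them to fresh elements of $B$; this is possible because $C\times D$ is finite, the diagonal values $g(a)$ are pairwise distinct ($g$ being injective on $A$), and $B$ is infinite. The resulting $\sigma$ is an admissible index, so $m^\sigma\in\M\subseteq\C_\alpha\cap\C_\beta$. Putting $t(x):=m^\sigma(f(x),h(x))$, a direct evaluation against the case distinction defining $m^\sigma$ shows $t$ agrees with $g$ on $F$: for $a\in F\cap A$ one has $f(a),h(a)\in B$ with $f(a)\notin A$, so the second clause is skipped and $(f(a),h(a))\in\dom(\sigma)$ gives $t(a)=\sigma(f(a),h(a))=g(a)$; for $x\in F\cap(B\cup\{\infty\})$ one has $f(x)=\infty$, so the first clause gives $t(x)=\infty=g(x)$. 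Since $t\in\cl{\C_\alpha\cup\C_\beta}$ and $F$ was arbitrary, $g\in\cll{\C_\alpha\cup\C_\beta}$.

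The construction itself is the only delicate point, and within it the one step that genuinely uses the hypothesis $\alpha\neq\beta$ is the guarantee $C\perp D$: without perpendicularity the pair $(f(a),h(a))$ could not lie in the domain of any $m^\sigma$, and the rerouting of $(f(a),h(a))$ to the prescribed value $g(a)$ would be unavailable. Everything else — the injectivity of $\sigma$, the behaviour on $B\cup\{\infty\}$, and the reduction to the generating sets $\GG_\gamma$ — is routine bookkeeping, so I expect the heart of the proof to be exactly the choice of $f,h$ deep in $B_\alpha,B_\beta$ together with the definition of $\sigma$.
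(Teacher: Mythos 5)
Your proof is correct and takes essentially the same approach as the paper: interpolate each $g\in\GG_\gamma$ on a finite set by a single term $m^\sigma(f(x),h(x))$ with $f\in\GG_\alpha$, $h\in\GG_\beta$, where the images of $F\cap A$ under $f$ and $h$ are chosen perpendicular and $\sigma$ is defined on the ``diagonal'' pairs $(f(a),h(a))$ to return the prescribed values $g(a)$. If anything, you spell out two details the paper leaves implicit --- the existence of perpendicular image sets via the first coordinate at which $\alpha$ and $\beta$ differ, and the injective extension of $\sigma$ to all of $C\mult D$ --- and you handle all of $F$ uniformly rather than first assuming $F\subseteq A$.
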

    \begin{proof}
        Let $\gamma\in 2^\omega$, $\gamma \nin \{\alpha,\beta\}$ be arbitrary, and consider any $h\in\GG_\gamma$. It
        suffices to show that $h\in\cll{\C_\alpha\cup\C_\beta}$. So let
        $F\subseteq X$ be finite. We have to find an operation in
        $\cl{\C_\alpha\cup\C_\beta}$ which agrees with $h$ on $F$. Assume
        for the moment that $F\subseteq A$. Pick any $C\subseteq
        B_\alpha$, $D\subseteq B_\beta$ with $|C|=|D|=|F|$ and such
        that $C\perp D$. Pick $f\in \GG_\alpha$ mapping $F$ onto $C$,
        and $g\in \GG_\beta$ mapping $F$ onto $D$. Then $(f,g): X\To
        X^2$ maps $F$ injectively into $C\mult D$. Thus there exists
        $\sigma: C\mult D\To B_\gamma$ such that $\sigma(f(x),g(x))=h(x)$
        for all $x\in F$. Now
        $m^\sigma(f(x),g(x))=\sigma(f(x),g(x))=h(x)$ for all $x\in F$,
        so $m^\sigma(f(x),g(x))\in \cl{\C_\alpha\cup\C_\beta}$ agrees with $h$ on $F$.\\
        For the case where $F$ is not a subset of $A$, one constructs the interpolation for $F':=F\cap A$. Then observe that
        the operation $m^\sigma(f(x),g(x))$ constructed sends all $b\in B\cup\{\infty\}$ to
        $\infty$, thus behaving like $h$ outside $A$ anyway.
    \end{proof}

Lemmas~\ref{lem:meet} and \ref{lem:join} clearly prove Theorem~\ref{thm:mcont}.

\section{Open problems}

Suprisingly, the method of the previous section seems to be hard to generalize: For example, we do not know:

\begin{prob}\label{prob:finiteHeight}
	Does every lattice of finite height which has cardinality $2^{\aleph_0}$ embed into $\Cll(X)$?
\end{prob}

When proving that all algebraic lattices with countably many compacts embed into $\Cll(X)$, we in fact embedded them into $\Monl(X)$ (Theorem~\ref{thm:algebraicIntervals}). The same could be true with $M_{2^\omega}$ (which we do not believe, though):

\begin{prob}\label{prob:Mcont}
	Does $M_{2^\omega}$ embed into $\Monl(X)$? Does it embed into $\Grl(X)$?
\end{prob}

In theory, even the following could hold:

\begin{prob}\label{prob:monoid}
	Does $\Cll(X)$ embed into $\Monl(X)$? Does it embed into $\Grl(X)$?
\end{prob}


\def\ocirc#1{\ifmmode\setbox0=\hbox{$#1$}\dimen0=\ht0 \advance\dimen0
  by1pt\rlap{\hbox to\wd0{\hss\raise\dimen0
  \hbox{\hskip.2em$\scriptscriptstyle\circ$}\hss}}#1\else {\accent"17 #1}\fi}

\end{document}